\definecolor{xxx}{rgb}{0.8,0,0}
\definecolor{red}{rgb}{0.8,0,0}
\newtheorem{lemma}{Lemma}
\newtheorem{theorem}{Theorem}
\newtheorem{remark}{Remark}
\renewcommand{\b}[1]{{\boldsymbol{#1}}}
\def\restrict#1{\raise-0.2ex\hbox{\ensuremath|}_{#1}}
\newcommand{\dK}{{\partial K}}
\newcommand{\bR}{\mathbb R}
\newcommand{\bld}[1]{\boldsymbol{#1}}
\newcommand{\curls}{{{\nabla\times}}}
\newcommand{\divs}{{\nabla\cdot}}
\newcommand{\grads}{{\nabla}}
\newcommand{\pol}{\mathbb{P}}
\newcommand{\vertiii}[1]{{\left\vert\kern-0.25ex\left\vert\kern-0.25ex\left\vert #1 
    \right\vert\kern-0.25ex\right\vert\kern-0.25ex\right\vert}}
\newcommand{\Oh}{\mathcal{T}_h}
\newcommand{\Eh}{\mathcal{E}_h}
\newcommand{\EK}{\mathcal{E}(K)}
\newcommand{\EKP}{\mathcal{E}(K')}
\newcommand{\EKI}{\mathcal{E}^o(K)}
\newcommand{\EKB}{\mathcal{E}^\partial(K)}
\newcommand{\bintK}[2]{\langle #1\,,\,#2 \rangle_{\partial{K}}}
\newcommand{\bint}[3]{\langle #1\,,\,#2 \rangle_{#3}}
\newcommand{\bintF}[2]{\langle #1\,,\,#2 \rangle_{F}}
\newcommand{\Vhk}{V_{h,k}}
\newcommand{\Shk}{\Sigma_{h,k}}
\newcommand{\Mhk}{M_{h,k}}
\newcommand{\CV}{\mathbb{V}}
\newcommand{\CX}{\mathbb{X}}
\newcommand{\jmp}[1]{\,[\![#1]\!]}
\newcommand{\avg}[1]{\,\{#1\}}
\begin{document}
\title[posteriori HDG]{Fully computable a posteriori error bounds for hybridizable discontinuous Galerkin finite element 
approximations}
\author{Mark Ainsworth}
\address{Division of Applied Mathematics, Brown University, 182 George St,
Providence RI 02912, USA.}
\email{Mark\_Ainsworth@brown.edu}
\thanks{First author gratefully acknowledges the partial support of this work
under AFOSR contract FA9550-12-1-0399.}

\author{Guosheng Fu}
\address{Division of Applied Mathematics, Brown University, 182 George St,
Providence RI 02912, USA.}
\email{Guosheng\_Fu@brown.edu}

\keywords{HDG, a posteriori error analysis, computable error bounds}
\subjclass{65N30. 65Y20. 65D17. 68U07}

\begin{abstract}
We derive a posteriori  error estimates for the hybridizable discontinuous Galerkin (HDG) methods, including both 
the primal and mixed formulations, 
for the approximation of a linear second-order elliptic problem on conforming simplicial meshes in two and three dimensions.

We obtain fully computable, constant free, a posteriori error bounds on 
the broken energy seminorm and the HDG energy (semi)norm of the error.
The estimators are also shown to provide local lower bounds for the HDG energy (semi)norm of the error up to a constant and a higher-order data oscillation term. For the primal HDG methods and mixed HDG methods with an appropriate choice of stabilization parameter, the 
 estimators are also shown to provide a lower bound for the broken energy seminorm of the error up to a constant and a higher-order data oscillation term.
 Numerical examples are given illustrating the theoretical results.
\end{abstract}
\maketitle

\section{Introduction}
Recent years have seen the developments of fully computable, guaranteed error bounds for 
the conforming \cite{LadevezeLeguillon83,Kelly84,AinsworthOden93,AinsworthOden00, DestuynderMetivet99,LuceWohlmuth04,NicaiseWitowskiWohlmuth08,BraessPillweinSchoberl09, Vohralik08, CarstensenMerdon10,CaiZhang12}, 
nonconforming \cite{DestuynderMetivet98,Ainsworth05,Kim07a, Kim07b, ErnVohralik13, CarstensenMerdon13, AinsworthRankin08}, discontinuous Galerkin
\cite{Ainsworth07a,Kim07a, Kim07b,CochezDhondtNicaise08, AinsworthRankin10a, AinsworthRankin11b}, and mixed finite element methods 
\cite{Ainsworth07b,Kim07a, AinsworthMa12, CockburnZhang14}; see also {\it unified frameworks} in \cite{Ainsworth10a,CarstensenEigelHoppeLobhard12,ErnVohralik15}. 

In comparison, 
there are relatively few works on a posteriori error estimates for the hybridizable discontinuous Galerkin (HDG) methods \cite{CockburnGopalakrishnanLazarov09}. 
The a posteriori estimates for HDG methods  that are currently available in the literature
\cite{CarstensenHoppeSharmaWarburton11,CockburnZhang12,CockburnZhang13,EggerWaluga13,ChenLiQiu16,GaticaSequeira16} are all of residual type, in which reliability is shown up to a generic (unknown) constant.
This means that, while the associated estimation may be suitable as local refinement indicators, they cannot provide a quantitative stopping criterion. Moreover, if only a single fixed mesh is used (as is often the case in practice) then the value of an a posteriori bound containing unknown constants is somewhat questionable.
Here we present, for the first time, fully computable a posteriori error bounds for HDG methods, for both the primal and mixed formulations, in the setting of a linear second-order elliptic problem on conforming simplicial meshes in two and three space dimensions. 
The key ingredient of our analysis is the local conservation property of the HDG methods, with which cheap element-wise {\it equilibrated fluxes} can be constructed.

The remainder of this article is organized as follows. Section \ref{sec:pre} 
presents the model problem and prepares the notation used throughout the article. In Section \ref{sec:primal}, we introduce the primal HDG schemes and the corresponding computation error bounds.
While in Section \ref{sec:mixed}, 
we introduce the mixed HDG schemes and the corresponding computation error bounds.
Numerical results are then presented in Section \ref{sec:numerics}. 
The proofs of the main results in Section 3 and Section 4 are presented in Section \ref{sec:proofs}.

\section{Preliminaries}
\label{sec:pre}
\subsection{Model Problem}
Consider the following model problem:
\begin{align}
\label{pde}
 \left.
\begin{tabular}{r l}
 $\b\sigma- a\,\grads u=$&\; \!\!\!\!$0$ \\
 $-\divs \b\sigma=$&\; \!\!\!\!$f\in L^2(\Omega)$ \\
\end{tabular}
 \right\}\text{ in }\Omega
\end{align}
subject to $u=0$ on $\partial \Omega$, where $\Omega\in \mathbb{R}^d$, $d\in\{2,3\}$, is a simply connected polygonal/polyhedral domain. 
The datum $a\in L^\infty(\Omega)$ is assumed to be strictly positive and, for simplicity, is assumed piecewise constant on subdomains of $\Omega$.

\subsection{Notation and finite elements}
We consider a family of partitions
$\Oh = \{K\}$ of the domain $\Omega$ 
into the union of nonoverlapping, shape-regular, simplicial elements such that
the nonempty intersection of a distinct pair of elements is a single common node,
single common edge or single common face (in three dimensions). The family of partitions is assumed to be locally quasi-uniform
in the sense that the ratio of the diameters of any pair of neighboring elements is
uniformly bounded above and below over the whole family. 
Furthermore, it is assumed that the partitioning is compatible with the data so that $a$ is  constant on
each element.

The set of all facets (edges in two dimensions and faces in three dimensions) 
of the elements is denoted by $\Eh$, which we partition into subsets
$\Eh^\partial$ and $\Eh^o$ consisting of facets lying on the boundary $\partial \Omega$, 
and the remaining interior facets, respectively. 
Likewise, the corresponding quantities relative to an individual element $K$ are denoted by 
$\EK, \EKB,$ and
$\EKI$, respectively.
For each facet $F \in \Eh$, the set $\widetilde F$ consists of those elements for which $F$ is a facet,
\begin{align}
\label{facet-union}
\widetilde F = \{K'\in \Oh:\; F\in \EKP \}, 
\end{align}
while, for each element $K\in \Oh$, the set $\widetilde K$ consists of those elements having a facet in common with $K$,
\begin{align}
\label{element-union}
\widetilde K = \{K'\in \Oh:\; \EK\cap \EKP\text{ is nonempty} \}.
\end{align}

Let $h_D$ denote the diameter of a domain $D$,
 $|K|$ denote the measure of an element $K$,
 $|F|$ denote the measure of a facet $F$, and 
 $|\partial K|= \sum_{F\in\EK} |F|$ denote the measure of the boundary of an element $K$.

Let 
 $\Shk, \Vhk$, and $\Mhk$ denote the finite dimensional spaces
\begin{subequations}
\label{fem-space}
\begin{alignat}{3}
 \label{fem-s}
 \Sigma_{h,k} := &\;\{\b\tau \in L^2(\Oh)^d: &&\;\;\b\tau\restrict K \in\pol_k(K)^d\quad\forall 
 K\in \Oh\},
 \\
 \label{fem-v}
 V_{h,k} :=&\; \{v\in L^2(\Oh): &&\;\;v\restrict{K}\in \pol_k(K)\;\;\forall K\in \Oh\},\\
 \label{fem-m}
 M_{h,k} := &\;\{\widehat v\in L^2(\Eh): &&\;\;\widehat v\restrict{F}\in \pol_k(F)\;\;\forall F\in \Eh\},
\end{alignat}
where $\pol_k(D)$ denotes the set of 
polynomials of degree at most $k\ge 0$ on the domain $D$.
The space of homogeneous polynomials of degree $k$ on a domain $D$ is denoted as $\widetilde{\pol}_{k}(D)$. 
We shall also need the subspace $ M_{h,k}^0\subset \Mhk$ given by 
\begin{align}
 \label{primal-space-m0}
 M_{h,k}^0 := \{\widehat v\in \Mhk:\;\;\widehat v\restrict F = 0\;\;\forall F\in \Eh^\partial
 \}.
\end{align}
\end{subequations}

To simplify notation,  we introduce the compound finite-dimensional space
\begin{align}
 \label{primal-compound}
 \CV_{h,k, \delta} :=  &\; 
  V_{h,k}\times  M_{h,k-\delta}^0, \quad k\ge 1, \delta \in\{0,1\},
\end{align}
which is used for the primal HDG scheme,
while the compound finite-dimensional space
\begin{align}
 \label{mixed-compound}
 \CX_{h,k} :=  &\; 
 \Sigma_{h,k}\times V_{h,k}\times  M_{h,k}^0, \quad k\ge 0,
\end{align}
 is used for the mixed HDG scheme.

 {The stabilization parameters for the HDG schemes will be taken from the  space 
$M_{h,0}^{\mathrm{dc}}$, where, for any nonnegative integer $m$,
\begin{align}
 \label{fes-stab}
 M_{h,m}^{\mathrm{dc}} : = \Pi_{K\in\Oh}\pol_m(\dK),
\end{align}
with $\pol_m({\dK}) := \{\mu\in L^2(\dK):\;\;
\mu\restrict F \in\pol_m(F)\;\;\forall F\in \EK\}.$}

We use the standard notation for jumps and averages \cite{ArnoldBrezziCockburnMarini02} of functions in $M_{h,m}^{\mathrm{dc}}$ and 
$[M_{h,m}^{\mathrm{dc}}]^d$ on the mesh skeleton $\Eh$:
Let $F$ be a facet shared by elements $K^+$ and $K^-$ with  unit normal vectors $\b n^+$ and 
$\b n^-$ on $F$ pointing exterior to $K^+$ and $K^-$ respectively, then 
for $q \in M_{h,m}^{\mathrm{dc}}$
\begin{subequations}
\label{avg-jmp}
\begin{align}
\label{avg-jmp-1}
 \avg{q} = \frac{1}{2}(q\restrict{K^+}+q\restrict{K^-}), 
\quad \jmp{q} = q\restrict{K^+}\b n^++q\restrict{K^-}\b n^-
\quad \text{ on } F\in \Eh^o.
\end{align}
Similarly, for $\b\phi\in [M_{h,m}^{\mathrm{dc}}]^d$ we set
\begin{align}
\label{avg-jmp-2}
 \avg{\b\phi} = \frac{1}{2}(\b\phi\restrict{K^+}+\b\phi\restrict{K^-}), 
\quad \jmp{\b\phi} = \b\phi\restrict{K^+}\cdot \b n^++\b\phi\restrict{K^-}\cdot\b n^-
\quad \text{ on } F\in \Eh^o.
\end{align}
On a boundary facet $F\in \Eh^\partial$, for each $q\in M_{h,m}^{\mathrm{dc}}$ we set 
\begin{align}
\label{avg-jmp-3}
 \avg{q} = q, 
\quad \jmp{q} = q\b n\quad \text{ on } F\in \Eh^\partial.
\end{align}
\end{subequations}

{We use the notation $(\cdot,\cdot)_{\omega}$ to denote the integral inner product over a region ${\omega}\subset \bR^d$, 
and $\|\cdot\|_{\omega}$ to denote the corresponding $L^2$-norm. 
We omit the subscript in the case when $\omega$ is the physical domain $\Omega$.
Finally, for each element $K\in\Oh$, we use the notation
$\bintK{\cdot}{\cdot}$ to denote the integral inner product over the element boundary $\dK$.
}

\section{The primal HDG methods and computable error bounds}
\label{sec:primal}
\subsection{Primal HDG formulation}

\newcommand{\Bpr}{\mathcal{B}_h^{pr}}
\newcommand{\Bmx}{\mathcal{B}_h^{mx}}
\newcommand{\Lpr}{\mathcal{L}_h^{pr}}
\newcommand{\Lmx}{\mathcal{L}_h^{mx}}
Let $\alpha_h\in M_{h,0}^{\mathrm{dc}}$ 
be a positive stabilization parameter 
to be specified later,
and define the bilinear form 
$\Bpr: \CV_{h,k,\delta}\times \CV_{h,k,\delta}
\rightarrow \mathbb{R}$ by
\begin{align}
 \label{primal-bilinear}
 \Bpr\big((u,\widehat u),(v,\widehat v)\big) =&\; \sum_{K\in\Oh}\Big\{
(a\,\grads u, \grads v)_K - \bintK{a\,\grads u\cdot \b n}{v-\widehat v\,}\\ 
&\!\!\!\!\!\!
\!\!\!\!\!\!
\!\!\!\!\!\!
\!\!\!\!\!\!
\!\!\!\!\!\!- \bintK{a\,\grads v\cdot \b n}{u-\widehat u }
+ \bintK{\alpha_h(P_M u-\widehat u)}{P_Mv-\widehat v\,} 
 \Big\}\nonumber,
\end{align}
where
$P_M$ denotes the $L^2$ projection onto the space $M_{h,k-\delta}^{\mathrm{dc}}$. 
We also define  the linear form $\Lpr: \CV_{h,k,\delta}\rightarrow \mathbb{R}$ by  
\begin{align}
 \label{primal-linear}
 \Lpr\big((v,\widehat v)\big) =&\; \sum_{K\in\Oh}(f, v)_K. 
\end{align}

Let $u$ be the solution of \eqref{pde}.
An approximation of $(u, u\restrict{\Eh})$ is obtained by seeking 
$(u_h,\widehat u_h)\in  \CV_{h,k,\delta}$ such
that
\begin{align}
 \label{primal-hdg}
 \Bpr\big((u_h,\widehat u_h), (v_h,\widehat v_h)\big)
 =
 \Lpr\big((v_h,\widehat v_h)\big)
 \quad \forall 
 (v_h,\widehat v_h)\in  \CV_{h,k,\delta}.
\end{align}
This scheme is known as the 
{\it hybridized, symmetric, interior penalty discontinuous Galerkin method} \cite{Lehrenfeld:10}.

\subsection{The choice of the stabilization 
parameter $\alpha_h$.
}
It is well-known \cite{Lehrenfeld:10} that \eqref{primal-hdg} is well-posed provided the stabilization parameter is ``sufficiently large''. 
The following result quantifies exactly how large $\alpha_h$ must be; similar results for  interior penalty discontinuous Galerkin methods 
can be found in \cite{Shahbazi05, Ainsworth07a, EpshteynRiviere07,AinsworthRankin10a,AinsworthRankin11b,AinsworthRankin12a}.
\begin{lemma}
\label{lemma:primal-stab}
Suppose the  stabilization parameter 
$\alpha_h$ is given by  
\begin{align}
 \label{stab-pr}
\alpha_h\restrict{F} = \frac{a\restrict{K}\,\gamma}{|F|}\sum_{F'\in\EK} \frac{|F'|^2}{|K|}\quad\quad
\text{ for all } F\in\EK, \text{ for all } K\in\Oh,
\end{align}
where $\gamma$ is a constant satisfying
\begin{align}
 \label{stab-cst}
\gamma > \frac{k(k+d-1)}{d}.
\end{align}
Then \eqref{primal-hdg} has a unique solution $(u_h,\widehat u_h)\in \CV_{h,k, \delta}$ 
for $k\ge 1$ and 
$\delta \in\{0,1\}$.
\end{lemma}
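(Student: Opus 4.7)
Since \eqref{primal-hdg} is a finite-dimensional square system, well-posedness reduces to showing that the only solution of the homogeneous problem is zero, which I will establish by proving that $\Bpr$ is positive definite on $\CV_{h,k,\delta}$ with respect to the DG-type energy seminorm
\[
 \enorm{(v,\widehat v)}^2 := \sum_{K\in\Oh}\Bigl\{\|a^{1/2}\grads v\|_K^2 + \|\alpha_h^{1/2}(P_M v-\widehat v)\|_{\partial K}^2\Bigr\}.
\]
The first step is to set $(v,\widehat v)=(u_h,\widehat u_h)$ in \eqref{primal-bilinear} to obtain
\[
 \Bpr((u_h,\widehat u_h),(u_h,\widehat u_h)) = \sum_{K\in\Oh}\Bigl\{\|a^{1/2}\grads u_h\|_K^2 - 2\bintK{a\grads u_h\cdot\b n}{u_h-\widehat u_h} + \|\alpha_h^{1/2}(P_M u_h-\widehat u_h)\|_{\partial K}^2\Bigr\}.
\]
Because $a\grads u_h\cdot\b n|_F\in\pol_{k-1}(F)\subset\pol_{k-\delta}(F)$ for $\delta\in\{0,1\}$ and $k\ge 1$, the $L^2$-projection $P_M$ is a self-adjoint idempotent with respect to which the cross term can be rewritten as $\langle a\grads u_h\cdot\b n, P_M u_h-\widehat u_h\rangle_{\partial K}$.

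\paragraph{Trace inequality and the role of $\alpha_h$.} The crux is to bound the cross term by the two squared terms. I would invoke the sharp face-wise Warburton--Hesthaven discrete trace inequality for simplices: for every $p\in\pol_{k-1}(K)$ and every face $F\in\EK$,
\[
 \|p\|_F^2 \le \frac{k(k+d-1)}{d}\cdot\frac{|F|}{|K|}\|p\|_K^2.
\]
Applying this component-wise to $a\grads u_h$ and weighting by $\alpha_h^{-1}$, the particular choice
\[
 \alpha_h|_F = \frac{a|_K\,\gamma}{|F|}\sum_{F'\in\EK}\frac{|F'|^2}{|K|}
\]
is designed precisely so that the sum $\sum_{F\in\EK}\frac{|F|}{|F|\cdot(\text{stuff})}\cdot\frac{|F|}{|K|}$ telescopes, yielding
\[
 \|\alpha_h^{-1/2}\,a\grads u_h\cdot\b n\|_{\partial K}^2 \le \frac{k(k+d-1)/d}{\gamma}\,\|a^{1/2}\grads u_h\|_K^2 =: \theta\,\|a^{1/2}\grads u_h\|_K^2,
\]
with $\theta<1$ by the hypothesis \eqref{stab-cst}.

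\paragraph{Coercivity and unique solvability.} Writing $X_K=\|a^{1/2}\grads u_h\|_K$ and $Y_K=\|\alpha_h^{1/2}(P_M u_h-\widehat u_h)\|_{\partial K}$, Cauchy--Schwarz on each element yields
\[
 \Bpr((u_h,\widehat u_h),(u_h,\widehat u_h)) \ge \sum_{K\in\Oh}\bigl(X_K^2 - 2\sqrt{\theta}\,X_K Y_K + Y_K^2\bigr) \ge (1-\sqrt{\theta})\,\enorm{(u_h,\widehat u_h)}^2,
\]
where the last inequality uses the elementary identity $X^2-2\sqrt\theta XY+Y^2-(1-\sqrt\theta)(X^2+Y^2)=\sqrt\theta(X-Y)^2\ge 0$. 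Thus $\Bpr$ is coercive on $\CV_{h,k,\delta}$ with respect to $\enorm{\cdot}$. To finish, I only need to check that $\enorm{\cdot}$ is a norm (not just a seminorm) on $\CV_{h,k,\delta}$: if $\enorm{(u_h,\widehat u_h)}=0$ then $u_h$ is piecewise constant and $P_M u_h=\widehat u_h$ on every facet; single-valuedness of $\widehat u_h$ across interior facets forces $u_h$ to be globally constant, and the boundary condition $\widehat u_h|_{\Eh^\partial}=0$ pins that constant to zero. The main obstacle, and the part requiring the most care, is the bookkeeping in the face-wise trace inequality step—verifying that the particular algebraic form of $\alpha_h$ matches the constant $k(k+d-1)/d$ exactly and that the $\delta\in\{0,1\}$ choice does not spoil the identity $\langle a\grads u_h\cdot\b n,u_h-\widehat u_h\rangle_{\partial K}=\langle a\grads u_h\cdot\b n,P_M u_h-\widehat u_h\rangle_{\partial K}$.
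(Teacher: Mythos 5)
Your proposal is correct and follows essentially the same route as the paper: reduce to injectivity of the homogeneous finite-dimensional system, rewrite the cross term via $P_M$ using $a\grads u_h\cdot\b n|_F\in\pol_{k-1}(F)\subset\pol_{k-\delta}(F)$, and absorb it with the Warburton--Hesthaven face-wise trace inequality, the choice of $\alpha_h$ making the facet sum collapse to $C_{k,K}=\sum_{F'\in\EK}|F'|^2/|K|$ exactly as in the paper. The only cosmetic difference is that you use a weighted Cauchy--Schwarz with the factor $\sqrt{\theta}=\sqrt{k(k+d-1)/(d\gamma)}$ and the identity $X^2-2\sqrt\theta XY+Y^2\ge(1-\sqrt\theta)(X^2+Y^2)$, whereas the paper runs Young's inequality with a free parameter $\epsilon$ squeezed between $D_{k,K}$ and $\gamma C_{k,K}$; your kernel argument (piecewise constants glued by the single-valued $\widehat u_h$ and pinned by the boundary condition) is actually spelled out more carefully than in the paper.
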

{The proof of this and other results in this section is postponed to Section \ref{sec:proofs}.}
%
\begin{remark}
Observe that the stabilization parameter \eqref{stab-pr} on a facet $F$ is 
proportional to $h_F^{-1}$, which gives 
an optimal order a priori convergence rate $\mathcal{O}(h^k)$ in the energy norm \cite{Lehrenfeld:10,Oikawa:15}.
\end{remark}


\subsection{The broken energy seminorm and the HDG energy norm}
For a given function $(v,\widehat v)\in \CV_{h,k,\delta} + 
(H^1(\Omega)\times L^2(\Eh))$, 
let the broken energy seminorm $\vertiii{\cdot}$
be denoted by
\begin{align}
 \label{broken-h1}
 \vertiii{(v,\widehat v)}_{pr} = \left(\sum_{K\in\Oh}(a\grads v, \grads v)_K\right)^{1/2}.
\end{align}
Our objective is to derive a fully computable estimator for the error in the HDG finite-element approximation 
$(e_u, \widehat e_u) = (u - u_h, u\restrict{\Eh}-\widehat u_h)$, 
where $u$ is the solution to \eqref{pde} and $(u_h,\widehat u_h)$ 
is the solution to \eqref{primal-hdg}.
Let the HDG energy norm 
$\vertiii{\cdot}_{HDG,pr}$
be
denoted by
\begin{align}
 \label{broken-hdg}
 \vertiii{(v,\widehat v)}_{HDG,pr} = \left(
  \vertiii{(v,\widehat v)}_{pr}^2 + \sum_{K\in\Oh} 
  \bintK{\alpha_h P_M(v-\widehat v)}{P_M(v-\widehat v)}
 \right)^{1/2}.
\end{align}
Observe that, since $P_M(e_u - \widehat e_u) = -P_M(u_h - \widehat u_h)$, the quantity 
\begin{align}
\label{jmp-primal}
 \sum_{K\in\Oh} 
  \bintK{\alpha_h P_M(e_u-\widehat e_u)}{P_M(e_u-\widehat e_u)}
\end{align}
is directly computable in terms of the HDG approximation $(u_h,\widehat u_h)$.
Hence, given a constant free estimator for $\vertiii{(e_u,\widehat e_u)}_{pr}$, 
we automatically have a constant free estimator for the HDG energy norm of the
error as well. 
The next result shows that, by analogy with the standard interior penalty methods \cite{Ainsworth07a,AinsworthRankin11b}, 
these norms are equivalent in the following sense.
\begin{lemma}
 \label{lemma:primal-norm-eq}
Let the stabilization parameter be given by \eqref{stab-pr} 
with the global constant $\gamma$ satisfying \eqref{stab-cst}, 
then the HDG energy norm and the broken energy seminorm 
of the error $(e_u, \widehat e_u) = (u - u_h, u\restrict{\Eh}-\widehat u_h)$
are equivalent. That is to say,
\begin{align}
 \vertiii{(e_u,\widehat e_u)}_{pr}\le \vertiii{(e_u, \widehat e_u)}_{HDG,pr},
\end{align}
and there exists a positive constant $c$, 
depending only on the shape-regularity of the mesh, the polynomial degree $k$, and the local permeability ratio 
between neighboring elements,  such that
\begin{align}
\label{norm-eq2}
c \vertiii{(e_u, \widehat e_u)}_{HDG,pr}^2\le 
\vertiii{(e_u, \widehat e_u)}_{pr}^2
+ \sum_{K\in\Oh} osc_{k-1}^2(f,K).
\end{align}
where the data oscillation on an element $K \in\Oh$ is defined
to be
\begin{align}
\label{osc}
 osc_m(f,K) = a^{-1/2}h_K
 \inf_{p \in \pol_m}\| f -p\|_K. 
\end{align}
\end{lemma}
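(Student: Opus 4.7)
Plan. The first inequality is immediate from the definition \eqref{broken-hdg}, which simply drops a nonnegative term. For the second, since $P_M(e_u-\widehat e_u) = -(P_M u_h - \widehat u_h)$ on element boundaries, the task reduces to showing
\[
J^2 := \sum_{K\in\Oh}\bintK{\alpha_h(P_M u_h - \widehat u_h)}{P_M u_h - \widehat u_h}
\;\lesssim\; \vertiii{(e_u,\widehat e_u)}_{pr}^2 + \sum_{K\in\Oh} osc_{k-1}^2(f,K),
\]
with a constant depending only on shape regularity, $k$, and ratios of $a$ between neighboring elements.

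The strategy, modeled on the interior penalty DG analysis in \cite{Ainsworth07a,AinsworthRankin11b}, is to introduce an $H^1_0(\Omega)$-conforming reconstruction $\widetilde u_h$ of the HDG pair $(u_h,\widehat u_h)$, defined via a permeability-weighted Oswald (averaging) operator acting on the skeleton values. Using $\alpha_h \asymp a\,h_F^{-1}$ from \eqref{stab-pr}, together with local trace and inverse inequalities on simplices, this reconstruction satisfies a two-sided equivalence
\[
J^2 \;\asymp\; \sum_{K\in\Oh}\left(\|a^{1/2}\nabla(u_h-\widetilde u_h)\|_K^2 + a\,h_K^{-2}\|u_h-\widetilde u_h\|_K^2\right).
\]
Writing $u_h-\widetilde u_h = -e_u + (u-\widetilde u_h)$ and applying the triangle inequality isolates the target quantity $\vertiii{(e_u,\widehat e_u)}_{pr}^2$ and reduces matters to bounding $\|a^{1/2}\nabla(u-\widetilde u_h)\|^2 + \sum_K a\,h_K^{-2}\|u-\widetilde u_h\|_K^2$ by the same right-hand side.

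For this last step I would exploit that $\widetilde u_h\in H^1_0(\Omega)$ is an admissible test function in the weak formulation of \eqref{pde}, together with the local conservation built into the HDG method: testing \eqref{primal-hdg} against an element-bubble supported in a single $K$ expresses the polynomial part of the element residual $f+\divs(a\,\grads u_h)$ in terms of boundary contributions involving $\alpha_h(P_M u_h - \widehat u_h)$, and a local Poincar\'e inequality then converts this into the required $L^2$ bound, with the non-polynomial remainder in the residual generating precisely $osc_{k-1}(f,K)$. I expect the main obstacle to lie exactly in this reduction: the Oswald weights must be calibrated so that the two-sided equivalence above has constants depending only on local permeability ratios, and the residual-to-oscillation argument must be threaded carefully so that the correct polynomial degree $k-1$ (rather than $k$) appears in the oscillation term.
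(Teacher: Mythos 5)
Your reduction to bounding the jump term $J^2$ is correct, but the central step---the claimed two-sided equivalence of $J^2$ with $\sum_K\bigl(\|a^{1/2}\grads(u_h-\widetilde u_h)\|_K^2+a h_K^{-2}\|u_h-\widetilde u_h\|_K^2\bigr)$ for an Oswald-type reconstruction $\widetilde u_h$---fails in the direction you need. The quantity $J^2$ measures $\|P_M u_h-\widehat u_h\|_F$, i.e.\ the distance from the trace of $u_h$ to the \emph{independent} skeleton unknown, not the inter-element jump of $u_h$. The local conservation property \eqref{jmp-cont-p} forces the explicit representation \eqref{explicit}: on an interior facet, $\widehat u_h$ equals a weighted average of the traces of $P_M u_h$ \emph{minus} a multiple of the normal-flux jump $\jmp{a\grads u_h}$. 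Thus even when $u_h$ is globally continuous (so any averaging reconstruction gives $\widetilde u_h=u_h$ and your right-hand side vanishes), $J^2$ remains positive whenever the normal flux jumps. This flux-jump contribution is invisible to a reconstruction built from $u_h$ or from the skeleton values; the paper isolates it via \eqref{explicit} and estimates it separately with the bubble-function technique, which is precisely where the oscillation of degree $k-1$ enters. Your proposal never brings $\jmp{a\grads u_h}$ into play.

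There is a second gap in the remaining (genuine jump) part. After the triangle inequality you are left with $\sum_K a h_K^{-2}\|u-\widetilde u_h\|_K^2$, and no local Poincar\'e inequality is available for $u-\widetilde u_h$ on an individual element (it has neither zero mean nor a vanishing trace there), so this term cannot be absorbed into the energy error plus oscillation. The actual obstruction is the facet-mean of $\jmp{P_M u_h}$: the paper splits $\|\jmp{P_M u_h}\|_F^2$ into its mean value and the zero-mean remainder, handles the remainder by trace and Poincar\'e inequalities, and controls the mean by inserting carefully chosen \emph{linear} test functions into the statically condensed scheme \eqref{single} together with the lower bound $\alpha_h\restrict{F}>\frac{a|_K}{|F|}(d+1)^2\rho(\b S_K)$ derived from \eqref{stab-pr}--\eqref{stab-cst} and Shewchuk's spectral-radius estimate. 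This is the only place the hypothesis on $\gamma$ is genuinely used, and your element-bubble/local-conservation argument does not engage with it; the element residual controls $f+\divs(a\grads u_h)$, not the mean of the trace jump.
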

{The proof of this result is postponed to Section \ref{sec:proofs}.}

\subsection{Local conservation}
The numerical flux is defined by 
\newcommand{\spr}{  \widehat{\b\sigma}_{h,pr}}
\newcommand{\smx}{  \widehat{\b\sigma}_{h,mx}}
\newcommand{\sprn}{  \widehat{\b\sigma}_{h,pr}\cdot{\bld n}}
\newcommand{\smxn}{  \widehat{\b\sigma}_{h,mx}\cdot{\bld n}}
\[
  \spr: = 
  a\grads u_h-\alpha_h(P_Mu_h-\widehat u_h) \b n\in [M_{h,k-\delta}^{\mathrm{dc}}]^d
 \]
and  satisfies the local conservation property
 \begin{align}
  \label{local-c-p}
(f,1)_K+\bintK{\sprn}{1} = 0
 \quad \text{ for each element $K\in\Oh$},
 \end{align}
%
{along with
 \begin{align}
 \label{jmp-cont-p}
\jmp{\spr}\restrict F = 0 \quad 
\text{ on each interior facet }  F\in \Eh^o.
 \end{align}
}
 
 These results are a straightforward consequence of the definition and \eqref{primal-hdg}.

\subsection{The computable error bounds}
We obtain 
computable error bounds for 
the discrete energy norm of the error by bounding the conforming and non-conforming errors separately \cite{Ainsworth07a,AinsworthRankin11b, Ainsworth10a}.
To this end, two types of post-processing scheme will be needed.
\subsubsection{Local (equilibrated) flux post-processing}
We define a 
local flux post-processing \cite{Ainsworth07a, AinsworthRankin10a} as follows:
Let $\b\sigma_h^*\in \Sigma_{h,k}$ be such that,
on each element $K$, there holds
\begin{subequations}
\label{primal-flux-p}
\begin{alignat}{3}
\label{primal-flux-p-1}
(\divs \b\sigma_h^*, v)_K = &\; 
-(f , v)_K
&&\quad \forall\, v \in \pol_{k-1}(K) \text{ and } (v,1)_K=0, \\
\label{primal-flux-p-2}
\bintK{\b\sigma_h^*\cdot\b n}{\widehat v}
 = &\; 
 \bintF{\sprn}{\widehat v}
 &&\quad \forall\, \widehat v \in \pol_{k}(F),\quad \forall F\in \EK,\\
\label{primal-flux-p-3}
(\b\sigma_h^*, \b\tau)_K = &\; 
(a\grads u_h, \b\tau)_K &&\quad \forall\, \b\tau \in 
\Sigma_{k, sbb}(K),
\end{alignat}
\end{subequations}
where 
\begin{align}
\label{div-free}
\Sigma_{k,sbb}(K) :=
\{\b\tau \in \pol_k(K)^d:\; 
\divs \b\tau = 0, \;\; 
\b\tau\cdot\b n\restrict F  = 0 \text{ for all }F\in\EK
\} 
\end{align}
 denotes the divergence-free ``bubble'' space.
 The unique solvability of \eqref{primal-flux-p} can be established using arguments similar to those used to study the closely related 
Brezzi-Douglas-Marini (BDM) projection \cite{BoffiBrezziFortin13}.
By the local conservation properties \eqref{local-c-p} and \eqref{jmp-cont-p}, we conclude that 
$  \b\sigma_h^* \in \Sigma_{h,k}\cap H(\mathrm{div};\Omega)$ satisfies
\begin{align}
\label{eq-p}
\divs \b\sigma_h^* = -\Pi_{k-1}f \text{ on $\Oh$}, 
 \text{ and }
 \b\sigma_h^*\cdot\b n =  \sprn \text{ on $\partial \Oh$}.
\end{align}
The quantity $\b\sigma_h^*$ is usually referred to as an {\it equilibrated flux} \cite{AinsworthOden00}.

\subsubsection{Local  potential post-processing}
We obtain a globally continuous potential 
by a simple averaging \cite{Ainsworth07a,AinsworthRankin11b} of the discontinuous potential $u_h$ as follows:
Let $\mathcal{N}_{K,k}$ index a set of points $\{\b x_m\}_{m\in\mathcal{N}_{K,k}}$
on $\overline K$ associated with
a Lagrange basis for the conforming finite-element space 
of order $k$ on $\Oh$ 
and let $\mathcal{N}_{K,k}^o$
denote the restriction of the set $\mathcal{N}_{K,k}$ to the points that do not lie on the boundary of element $K$, with $\mathcal{N}_{\gamma,k}$ being its complementary set.
Let $N_{\gamma,k}^\partial$ denote the
restriction of the set $\mathcal{N}_{\gamma,k}$ to the points that lie on the closure of the  boundary $\partial \Omega$. 
For $m\in \mathcal{N}_{\gamma,k}$, let 
$\Omega_m$ denote the set of elements in $\Oh$ whose closure contains the point $\b x_m$.

The post-processed  potential
$u_h^*\in V_{h,k}\cap 
H^1(\Omega)
$
is obtained through a  simple averaging of the degrees of freedom for $u_h$:
for all elements $K\in\Oh$, $u_h^*\restrict K = S_k(u_h)\restrict K \in\pol_k(K)$, where the nodal values are given by
\begin{align}
\label{primal-potential-p}
S_k(u_h)(\b x_m) =\left\{ \begin{tabular}{l l}
                 $0$ & if $m \in \mathcal{N}_{\gamma,k}^\partial$,
                 \vspace{0.12cm}\\
                 $u_h(\b x_m)$ & if $m \in \mathcal{N}_{K,k}^o$,
                 \vspace{0.12cm}\\
                 $\frac{1}{\# \Omega_m}\sum_{K'\in \Omega_m}
                 u_{h}\restrict {K'}(\b x_m)
                 $ & if $m \in \mathcal{N}_{\gamma,k} \backslash\mathcal{N}_{\gamma,k}^\partial$, 
                \end{tabular}
                \right.
\end{align}
and $\# \Omega_m$ denotes the number of elements of $\Oh$ contained in the patch $\Omega_m$.

\subsubsection{Computable error bounds}
The foregoing developments show that each of the quantities
\begin{subequations}
\label{indicator-p}
\begin{align}
\label{indicator-p-1}
\eta_{CF, K} = &\; \|a^{-1/2}(\b\sigma_h^*-a\grads u_h)\|_K
+
\frac{h_K}{\pi}a\restrict K^{-1/2}\, \|f-\Pi_{k-1}f\|_K
,\\
\label{indicator-p-2}
\eta_{NC, K} = &\; \|a^{1/2}\grads (u_h-u_h^*)\|_K
\end{align}
\end{subequations}
can be computed directly from the primal HDG approximation using purely local computations. 
The next result shows that together
these quantities provide a computable, constant-free, upper bound on  
the broken energy seminorm of the
error:

\begin{theorem}
\label{thm:primal}
 Let 
$ \eta_{CF, K}$ and $\eta_{NC, K}$ be defined as in \eqref{indicator-p}, 
 and let the stabilization parameter $\alpha_h$  be of the form \eqref{stab-pr} where $\gamma$ 
 satisfies \eqref{stab-cst}. 
 Then 
\begin{align}
 \label{broken-energy-estimate}
 \vertiii{(e_u,\widehat e_u)}_{pr}^2
 \le \eta^2 = \sum_{K\in\Oh} (\eta_{CF,K}^2+\eta_{NC,K}^2).
\end{align}
Moreover, there exists a positive constant $c$, 
depending only on the shape-regularity of the mesh, the polynomial degree $k$, and the local permeability ratio 
between neighboring elements,  such that
\begin{align}
 \label{broken-energy-estimate2}
c\sum_{K\in\Oh} (\eta_{CF,K}^2+\eta_{NC,K}^2)\le \vertiii{(e_u,\widehat e_u)}_{pr}^2
+\sum_{K\in\Oh} osc_{k-1}^2(f,K).
\end{align}
Furthermore, 
\begin{align*}
 \vertiii{(e_u,\widehat e_u)}_{HDG,pr}^2
 \le \eta_{HDG}^2 = \eta^2 +
 \sum_{K\in\Oh}\bintK{\alpha_h(P_M u_h-\widehat u_h)}{P_M u_h-\widehat u_h},
\end{align*}
and
\begin{align*}
c \,\eta_{HDG}^2\le 
 \vertiii{(e_u,\widehat e_u)}_{HDG,pr}^2
+\sum_{K\in\Oh} osc_{k-1}^2(f,K).
\end{align*}
where $c$ is as above. 
\end{theorem}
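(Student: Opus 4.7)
The plan is to follow the conforming/nonconforming decomposition strategy. Set $v:=u-u_h^*\in H^1_0(\Omega)$ and $w:=u_h^*-u_h$, so that elementwise $\grads e_u=\grads v+\grads w$ and the nonconforming part contributes directly $\|a^{1/2}\grads w\|_K^2=\eta_{NC,K}^2$.

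For the upper bound on $\vertiii{(e_u,\widehat e_u)}_{pr}$, I would control the conforming part via the equilibrated flux. Starting from
\[
\|a^{1/2}\grads v\|^2=(a\grads(u-u_h^*),\grads v),
\]
I use $v\in H^1_0$ with $-\divs\b\sigma=f$ to replace $(a\grads u,\grads v)$ by $(f,v)$, then insert $\pm\Pi_{k-1}f$ and integrate by parts against $\b\sigma_h^*\in H(\mathrm{div};\Omega)$, which satisfies $-\divs\b\sigma_h^*=\Pi_{k-1}f$ by the local conservation \eqref{local-c-p} and single-valuedness \eqref{jmp-cont-p}. This yields
\[
\|a^{1/2}\grads v\|^2=(f-\Pi_{k-1}f,\,v)+(\b\sigma_h^*-a\grads u_h^*,\,\grads v).
\]
The oscillation term is bounded element-wise by subtracting the mean of $v$ on $K$ (legitimate since $\Pi_{k-1}f$ contains the constants for $k\ge1$) and applying the sharp Poincar\'e inequality with constant $h_K/\pi$, producing the $\tfrac{h_K}{\pi}a^{-1/2}\|f-\Pi_{k-1}f\|_K$ contribution in $\eta_{CF,K}$. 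For the flux-misfit term I split $a\grads u_h^*=a\grads u_h+a\grads w$ and apply Cauchy-Schwarz term-by-term. The constant-free sum-of-squares form in \eqref{broken-energy-estimate} then follows by a careful element-wise Cauchy-Schwarz on the joint $v$-$w$ decomposition of $\grads e_u$ (a naive triangle inequality would cost a factor $\sqrt{2}$). The HDG-norm upper bound is obtained by appending the penalty term, which is directly computable from $(u_h,\widehat u_h)$ through $P_M(e_u-\widehat e_u)=-P_M(u_h-\widehat u_h)$.

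For the efficiency (lower) bound I would use the standard bubble-function technique. The nonconforming indicator $\eta_{NC,K}$ is controlled in terms of the facet jumps of $u_h$ on the patch $\widetilde K$ via a Karakashian-Pascal scaling argument, and those jumps are in turn bounded by $\vertiii{(e_u,\widehat e_u)}_{pr}$ on $\widetilde K$ plus data oscillation. For $\eta_{CF,K}$, I would exploit the characterization \eqref{primal-flux-p} to express $\b\sigma_h^*-a\grads u_h$ in terms of the element residual $f+\divs(a\grads u_h)$ (tested against an interior bubble) and the boundary mismatch $(a\grads u_h-\spr)\cdot\b n$ on each facet of $K$ (tested against face bubbles), and then bound each via inverse and trace inequalities in Verf\"urth style. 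The HDG-norm lower bound is then immediate from feeding this broken-energy efficiency estimate into the norm equivalence \eqref{norm-eq2} of Lemma~\ref{lemma:primal-norm-eq}.

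The principal obstacle is the efficiency of $\eta_{CF,K}$: the post-processing \eqref{primal-flux-p} is characterized through a BDM-like projection based on the numerical trace $\spr$ rather than a classical residual, so relating $\b\sigma_h^*-a\grads u_h$ to purely local residual quantities requires careful use of the HDG stabilization \eqref{stab-pr} to absorb the penalty contribution $\alpha_h^{1/2}(P_Mu_h-\widehat u_h)$ on $\partial K$ back into the local energy error. A secondary difficulty is keeping the upper bound \eqref{broken-energy-estimate} sharp, which precludes naive triangle inequalities between the conforming and nonconforming parts and requires element-wise bookkeeping of the $v$-$w$ cross-terms.
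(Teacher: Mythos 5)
Your building blocks are the right ones (equilibrated flux $\b\sigma_h^*$, averaged potential $u_h^*$, the Payne--Weinberger constant $h_K/\pi$, bubble functions for efficiency, and Lemma~\ref{lemma:primal-norm-eq} to handle the penalty terms), but the decomposition you start from is not the one that makes the constant-free bound \eqref{broken-energy-estimate} work. You split $\grads e_u=\grads v+\grads w$ with $v=u-u_h^*$ and $w=u_h^*-u_h$; this splitting is not $a$-orthogonal, so $\vertiii{(e_u,\widehat e_u)}_{pr}^2=\|a^{1/2}\grads v\|^2+2(a\grads v,\grads w)+\|a^{1/2}\grads w\|^2$ carries a cross term that your "careful element-wise Cauchy--Schwarz" remark does not eliminate, and your own computation makes matters worse: after inserting $a\grads u_h^*=a\grads u_h+a\grads w$ into the flux-misfit term, the bound on $\|a^{1/2}\grads v\|$ already contains $\eta_{NC,K}$, so $\eta_{NC,K}$ ends up counted twice and the clean sum-of-squares $\sum_K(\eta_{CF,K}^2+\eta_{NC,K}^2)$ is out of reach. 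The paper avoids this entirely by using the \emph{orthogonal} Helmholtz decomposition of Lemma~\ref{lemma:helmholtz}: $a\grads u-a\grads u_h=a\grads\phi+\curls\psi$ with $\phi\in H^1_0(\Omega)$ defined variationally by \eqref{conforming} (it is not $u-u_h^*$), so that $\vertiii{(e_u,\widehat e_u)}_{pr}^2=\|a^{1/2}\grads\phi\|^2+\|a^{-1/2}\curls\psi\|^2$ exactly, and the two pieces are bounded separately by \eqref{ll1} and \eqref{ll2}; the post-processed potential $u_h^*$ enters only through $(\grads u-\grads u_h,\curls\psi)=(\grads u_h^*-\grads u_h,\curls\psi)$, which uses $(\grads s,\curls\psi)=0$ for $s\in H^1_0(\Omega)$. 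Without this orthogonality your argument proves at best a bound with an extra constant, not \eqref{broken-energy-estimate}.

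On the efficiency side you are closer to the paper's route (the estimates \eqref{ll3}--\eqref{ll4} are exactly the Verf\"urth-style bounds you describe, taken from the interior-penalty DG literature), but the "principal obstacle" you flag --- absorbing the boundary mismatch $\alpha_h(P_Mu_h-\widehat u_h)$ into the energy error --- is not left to ad hoc use of \eqref{stab-pr}: it is precisely the content of Lemma~\ref{lemma:primal-norm-eq}, whose proof eliminates $\widehat u_h$ via the conservation identity \eqref{explicit} and a spectral-radius bound on the element stiffness matrix. The lower bound \eqref{broken-energy-estimate2} for the broken seminorm is then obtained by combining \eqref{ll3}, \eqref{ll4} with that lemma, and only afterwards does the HDG-norm statement follow; your proposal inverts this order and leaves the key absorption step unproved.
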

The proof of this result 
is similar to \cite[Theorem 2]{Ainsworth07a} and an outline of the main steps is given in Section \ref{sec:proofs} below.
Numerical examples illustrating the bounds in practice are given in Section \ref{sec:numerics}.
%

\section{The mixed HDG methods and computable error bounds}
\label{sec:mixed}
\subsection{The mixed HDG formulation}
Whereas the primal HDG method gives an approximation for $(u, u\restrict{\Eh})$, the mixed HDG method seeks, in addition, to approximate the flux $(a\grads u, u, u\restrict{\Eh})$.

Let $\alpha_h\in M_{h,0}^{d.c.}$ be 
a positive stabilization parameter to be specified later,
and define the bilinear form 
$\Bmx: \CX_{h,k}\times \CX_{h,k}
\rightarrow \mathbb{R}$ by
\begin{align}
 \label{mixed-bilinear}
 \Bmx\big((\b\sigma, u,\widehat u),(\b\tau, v,\widehat v)\big) =&\; \sum_{K\in\Oh}\Big\{
(a^{-1}\,\b\sigma, \b\tau)_K 
+(u, \divs\b\tau)_K 
- \bintK{\widehat u}{\b\tau\cdot\b n}\\ 
&\!\!\!\!\!\!
\!\!\!\!\!\!
\!\!\!\!\!\!
\!\!\!\!\!\!
\!\!\!\!\!\!+
(\b\sigma, \grads v)_K 
- \bintK{\b\sigma\cdot\b n-\alpha_h(u-\widehat u)}{v-\widehat v\,} 
 \Big\}\nonumber,
\end{align}
along with the linear form $\Lmx: \CX_{h,k}\rightarrow \mathbb{R}$ by  
\begin{align}
 \label{mixed-linear}
 \Lmx\big((\b\tau, v,\widehat v)\big) =&\; \sum_{K\in\Oh}(f, v)_K. 
\end{align}

An approximation of the true solution $(a\grads u, u, u\restrict{\Eh})$ is obtained by seeking 
$(\b\sigma_h, u_h,\widehat u_h)\in  \CX_{h,k}$ such
that
\begin{align}
 \label{mixed-hdg}
 \Bmx\big((\b\sigma_h, u_h,\widehat u_h), (\b\tau_h, v_h,\widehat v_h)\big)
 =
 \Lmx\big((\b\tau_h, v_h,\widehat v_h)\big)
 \quad \forall 
 (\b\tau_h, v_h,\widehat v_h)\in  \CX_{h,k}.
\end{align}
This  scheme was originally termed the 
{\it local discontinuous Galerkin-hybridizable method} (LDG-H) \cite{CockburnGopalakrishnanLazarov09} but is referred to here as 
the mixed HDG approximation.

\subsection{The choice of the stabilization 
parameter $\alpha_h$.
}
The mixed HDG scheme enjoys greater stability properties than the primal HDG scheme as 
the stabilization parameter $\alpha_h$ need only 
be {(partially)} positive 
in order for the scheme to be well-posed 
as the following result
\cite[Proposition 3.2]{CockburnGopalakrishnanLazarov09} shows.
\begin{lemma}
\label{lemma:mixed-stab}
  If the nonnegative 
  stabilization parameter 
$\alpha_h$  is chosen such that
$\alpha_h >0$ on at least one facet $F\in\EK$ for every element $K$, then there exists a unique solution $(\b\sigma_h, 
u_h,\widehat u_h)\in \CX_{h,k}$ 
for $k\ge 0$.
\end{lemma}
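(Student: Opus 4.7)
Because \eqref{mixed-hdg} is a square linear system in finite dimensions, existence and uniqueness of the solution are equivalent, so the plan is to show that the homogeneous problem (with $f=0$) admits only the trivial solution. First I would test with $(\b\tau,v,\widehat v)=(\b\sigma_h,u_h,\widehat u_h)$. The cross terms $(u_h,\divs\b\sigma_h)_K+(\b\sigma_h,\grads u_h)_K=\bintK{u_h}{\b\sigma_h\cdot\b n}$ (by integration by parts) cancel against the $\bintK{\widehat u_h}{\b\sigma_h\cdot\b n}$ and $\bintK{\b\sigma_h\cdot\b n}{u_h-\widehat u_h}$ contributions, yielding the energy identity
\[
0=\sum_{K\in\Oh}\bigl\{(a^{-1}\b\sigma_h,\b\sigma_h)_K+\bintK{\alpha_h(u_h-\widehat u_h)}{u_h-\widehat u_h}\bigr\}.
\]
Since $a>0$ this forces $\b\sigma_h\equiv 0$, and since $\alpha_h\ge 0$, we obtain $u_h|_K=\widehat u_h|_F$ on every facet $F\in\EK$ on which $\alpha_h|_{F,K}>0$. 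By hypothesis, at least one such facet $F_K$ exists for every element $K$.

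With $\b\sigma_h=0$, testing the mixed equation with $(\b\tau,0,0)$ supported in a single element reduces to the local identity
\[
(u_h,\divs\b\tau)_K=\bintK{\widehat u_h}{\b\tau\cdot\b n}\qquad\forall\,\b\tau\in\pol_k(K)^d.
\]
I would exploit this in two complementary ways. Selecting $\b\tau$ with $\divs\b\tau=0$ kills the left-hand side; since the normal traces of such $\b\tau$ sweep out $\{(g_F)_{F\in\EK}\in\prod_F\pol_k(F):\sum_F\int_F g_F=0\}$ (by standard BDM surjectivity), first varying $g_F$ over mean-zero data on a single facet shows that $\widehat u_h|_F$ is constant on each facet, and then balancing constants across pairs of facets shows these constants coincide; thus $\widehat u_h|_{\partial K}\equiv c_K$ for some scalar $c_K$. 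Conversely, selecting $\b\tau$ with $\b\tau\cdot\b n|_{\partial K}=0$ kills the right-hand side, and surjectivity of $\divs$ onto mean-zero polynomials yields $u_h\perp\{q\in\pol_{k-1}(K):\int_K q=0\}$. Combining these gives $u_h-c_K\perp\pol_{k-1}(K)$ in $L^2(K)$.

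The key remaining step is to upgrade $u_h-c_K$ from ``mean-value-$c_K$'' to ``equal to $c_K$''. Writing $w:=u_h-c_K\in\pol_k(K)$, the energy step guarantees $w|_{F_K}=0$. Factor $w=L_{F_K}\,\widetilde w$, where $L_{F_K}$ is the linear polynomial vanishing on $F_K$ and strictly positive in the interior of $K$, and $\widetilde w\in\pol_{k-1}(K)$. Testing the orthogonality relation against $\widetilde w$ itself gives $\int_K L_{F_K}\,\widetilde w^{\,2}=0$, and positivity of $L_{F_K}$ a.e.\ in $K$ forces $\widetilde w=0$, whence $w=0$ and $u_h\equiv c_K$ on $K$. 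For $k=0$ this factorization step is vacuous since $u_h$ is already constant; only a minor adjustment using linear independence of the $d+1$ outward face normals of a simplex is needed in the first-equation analysis to still conclude $\widehat u_h|_{\partial K}\equiv c_K$.

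Finally, since $\widehat u_h$ is single-valued on interior facets and the element-wise constant $c_K$ equals $\widehat u_h$ on every shared facet, connectedness of $\Omega$ forces a single global value. The boundary condition $\widehat u_h\in M_{h,k}^0$ makes that common value zero, so $u_h\equiv 0$ and $\widehat u_h\equiv 0$, completing uniqueness. The main obstacle is the factorization step: without the hypothesis that $\alpha_h>0$ on at least one facet of every element, $w$ need not vanish on any face of $K$ and the orthogonality-plus-positivity trick collapses, which is precisely why that (minimal) partial-positivity assumption on $\alpha_h$ enters the statement.
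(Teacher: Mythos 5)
Your proof is correct, but note that the paper does not actually prove this lemma: it simply quotes it from \cite[Proposition 3.2]{CockburnGopalakrishnanLazarov09}, so there is no in-paper argument to match. What you have written is a complete, self-contained reconstruction of that cited result. Your opening energy identity (testing the homogeneous system with the solution itself so that the cross terms telescope, leaving $\sum_K\{(a^{-1}\b\sigma_h,\b\sigma_h)_K+\bintK{\alpha_h(u_h-\widehat u_h)}{u_h-\widehat u_h}\}=0$) is exactly the standard first step, and the value you add is the explicit local unisolvency argument: once $\b\sigma_h=0$, the identity $(u_h,\divs\b\tau)_K=\bintK{\widehat u_h}{\b\tau\cdot\b n}$ for all $\b\tau\in\pol_k(K)^d$, combined with BDM-type surjectivity, forces $\widehat u_h\restrict{\dK}\equiv c_K$ and $u_h-c_K\perp\pol_{k-1}(K)$, after which the factorization $w=L_{F_K}\widetilde w$ and positivity of the barycentric coordinate give $w=0$. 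This divisibility-plus-positivity trick is a clean and genuinely elementary way to close the local argument, and it makes transparent exactly where the hypothesis ``$\alpha_h>0$ on at least one facet per element'' is used. Two small expository points you may wish to tighten: the step from ``$u_h\perp$ mean-zero $\pol_{k-1}(K)$'' to ``$u_h-c_K\perp\pol_{k-1}(K)$'' needs the additional observation that $(u_h-c_K,1)_K=(u_h-c_K,\divs\b\tau)_K=0$ for a $\b\tau$ with $\divs\b\tau=1$, i.e.\ the mean of $u_h$ equals $c_K$ (this does follow from the same identity with general $\b\tau$, so nothing is missing, but say it); and for $k=0$ the $d+1$ outward facet normals of a simplex are not linearly independent --- the correct statement is that they span $\bR^d$ subject to the single relation $\sum_{F\in\EK}|F|\,\b n_F=0$, which is precisely what identifies the image of the constant fields with the mean-compatible trace data.
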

\begin{remark}[Stabilization parameter]
The two most common choices of stabilization parameter used in practice are: 
\begin{itemize}
 \item  uniform stabilization
 \begin{subequations}
 \label{mixed-stab}
  \begin{align}
  \label{mixed-stab1}
    \alpha_h\restrict{F} = &\;a\restrict K 
    && \text{ for all } F\in\EK,\;\;
    \text{ for all } K\in\Oh,
    \end{align}
 \item single-facet stabilization
    \begin{align}
  \label{mixed-stab2}
    \alpha_h\restrict{F} = &\;\left\{ \begin{tabular}{l l}
   $a\restrict K$ & if $F= F^*_K$\\                  
   $0$ & if $F\not =  F^*_K$       
                             \end{tabular}
                             \right.
                             && \text{ for all } F\in\EK,\;\;
    \text{ for all } K\in\Oh,
    \end{align}
 \end{subequations}
 where $F^*_K$ is an arbitrarily chosen but fixed facet of $K$.
\end{itemize}
 
Each of the above choices of stabilization parameters results in an 
optimal a priori convergence rate $\mathcal{O}(h^{k+1})$
of the error in the energy norm \cite{CockburnGopalakrishnanSayas10}. 

\end{remark}


\subsubsection{The broken energy seminorm and the HDG energy seminorm}
For a given function $(\b\tau, v,\widehat v)\in \CX_{h,k,\delta} + 
(H(\mathrm{div};\Omega)\times H^1(\Omega)\times L^2(\Eh))$, 
let the broken energy seminorm $\vertiii{\cdot}_{mx}$
be denoted by
\begin{align}
 \label{broken-h1-m}
 \vertiii{(\b\tau, v,\widehat v)}_{mx} = 
 \left(\sum_{K\in\Oh}(a^{-1} \b\tau, \b\tau)_K\right)^{1/2},
\end{align}
and let the mixed HDG energy seminorm 
$\vertiii{\cdot}_{HDG,mx}$
be
denoted by
\begin{align}
 \label{broken-hdg-m}
 \vertiii{(\b\tau, v,\widehat v)}_{HDG,mx} = \left(\!\!
  \vertiii{(\b\tau, v,\widehat v)}_{mx}^2 + \sum_{K\in\Oh} 
  h_K\bintK{\alpha_h (v-\widehat v)}{(v-\widehat v)}
 \!\!\right)^{1/2}.
\end{align}

The error in the mixed HDG finite-element approximation is denoted by 
$(\b e_\sigma, e_u, \widehat e_u) = 
(\b\sigma-\b\sigma_h, u - u_h, u\restrict{\Eh}-\widehat u_h)$, 
where $(\b\sigma, u)$ is the solution to \eqref{pde} and $(\b\sigma_h, u_h,\widehat u_h)$ 
is the solution to \eqref{mixed-hdg}.
Similarly to the primal HDG case, we have $e_u - \widehat e_u = -(u_h - \widehat u_h)$ and hence the quantity 
\[
\sum_{K\in\Oh} 
h_K  \bintK{\alpha_h(e_u-\widehat e_u)}{(e_u-\widehat e_u)}
\]
can be evaluated directly given the mixed HDG approximation.
Consequently, given a constant free estimator for the broken energy
seminorm of the error, we automatically have a constant free estimator for the HDG energy seminorm of the error as well. 
The next result shows that, by analogy with the primal HDG case, the HDG energy seminorm of the error is equivalent to the broken energy 
seminorm when the single-facet stabilization \eqref{mixed-stab2} is used. 

{However, the equivalence fails to hold if the uniform stabilization \eqref{mixed-stab1} 
is employed.
For instance, in the case of lowest order ($k=0$) approximation,
the discrete energy norm (plus the data oscillation)  cannot control the jump term 
$
 \sum_{K\in\Oh} 
  h_K\bintK{\alpha_h(u_h-\widehat u_h)}{(u_h-\widehat u_h)},
$
as shown by the counterexample constructed in \cite[Section 2.4.1]{CockburnNochettoZhang16}.
The situation for general $k\in \mathbb{N}$ remains  an open problem at this time.}

\begin{lemma}
 \label{lemma:mixed-norm-eq}
Let the stabilization parameter $\alpha_h$ given by  \eqref{mixed-stab2}, 
then the HDG energy seminorm and the broken energy seminorm 
of the error $(\b e_\sigma, e_u, \widehat e_u) = (
\b\sigma  - \b\sigma_h, u - u_h, u\restrict{\Eh}-\widehat u_h)$
are equivalent in the sense that
\begin{align}
\label{eq1}
 \vertiii{(\b e_\sigma, e_u, \widehat e_u)}_{mx}\le \vertiii{(\b e_\sigma, e_u, \widehat e_u)}_{HDG,mx},
\end{align}
and there exists a positive constant $c$, 
depending only on the shape-regularity of the mesh and the polynomial degree $k$,
such that
\begin{align}
\label{eq2}
c \vertiii{(\b e_\sigma, e_u, \widehat e_u)}_{HDG,mx}^2\le 
\vertiii{(\b e_\sigma, e_u, \widehat e_u)}_{mx}^2
+ \sum_{K\in\Oh} osc_k^2(f,K).
\end{align}
\end{lemma}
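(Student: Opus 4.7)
The first inequality \eqref{eq1} is immediate from the definition of $\vertiii{\cdot}_{HDG,mx}$, so the task reduces to establishing the reverse inequality \eqref{eq2}. Since $e_u - \widehat e_u = -(u_h - \widehat u_h)$ on $\Eh$, under the single-facet stabilization \eqref{mixed-stab2} the stabilization contribution in the HDG seminorm reads
$$\sum_{K\in\Oh} h_K \bintK{\alpha_h(u_h - \widehat u_h)}{u_h - \widehat u_h} = \sum_{K\in\Oh} h_K\, a\restrict{K}\, \|u_h - \widehat u_h\|_{F^*_K}^2,$$
so it is enough to control this sum by $c\,\|a^{-1/2}\b e_\sigma\|^2 + c\sum_K osc_k^2(f,K)$.

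The plan is to extract a local ``momentum'' identity and then use a discrete face-bubble test function together with an equilibrated flux reconstruction. Testing \eqref{mixed-hdg} with $(\b 0, v, 0)$ for $v \in \pol_k(K)$ supported on a single element $K$, integrating by parts, and using the PDE $-\divs \b\sigma = f$ together with $\divs \b\sigma_h \in \pol_{k-1}(K) \subset \pol_k(K)$ gives
$$a\restrict{K} \int_{F^*_K}(u_h - \widehat u_h)\, v = (\Pi_k f + \divs \b\sigma_h, v)_K \qquad \forall\, v \in \pol_k(K).$$
I would then construct, via a face-bubble function times a polynomial lift of the boundary data transplanted from a reference element, an extension $v_K \in \pol_k(K)$ with $v_K|_{F^*_K} = u_h - \widehat u_h$, $v_K = 0$ on $\EK \setminus \{F^*_K\}$, and $\|v_K\|_K \le c\, h_K^{1/2}\,\|u_h - \widehat u_h\|_{F^*_K}$. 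Substituting $v = v_K$, applying Cauchy--Schwarz and cancelling a factor of $\|u_h-\widehat u_h\|_{F^*_K}$ yields the intermediate bound
$$h_K\, a\restrict{K}\, \|u_h - \widehat u_h\|_{F^*_K}^2 \le c\, h_K^2\, a\restrict{K}^{-1}\, \|\Pi_k f + \divs \b\sigma_h\|_K^2.$$

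The main obstacle is the concluding step: bounding $h_K\,\|\Pi_k f + \divs \b\sigma_h\|_K$ by $\|\b e_\sigma\|_K$ up to oscillation, given only that $\b e_\sigma \in L^2$ (not in $H(\divs;\Omega)$, since $\b\sigma_h$ is merely piecewise smooth). The approach I would pursue is to introduce an $H(\divs;\Omega)$-conforming equilibrated flux reconstruction $\b\sigma_h^* \in \Sigma_{h,k} \cap H(\divs;\Omega)$ built elementwise from the HDG numerical flux $\smx$, analogous to the primal construction \eqref{primal-flux-p} and satisfying $\divs \b\sigma_h^* = -\Pi_k f$ on each $K$. Then $\Pi_k f + \divs \b\sigma_h = \divs(\b\sigma_h - \b\sigma_h^*)$, and since $\b\sigma_h - \b\sigma_h^* \in \Sigma_{h,k}$ is piecewise polynomial, an inverse inequality gives
$$h_K\,\|\Pi_k f + \divs \b\sigma_h\|_K \le c\,\|\b\sigma_h - \b\sigma_h^*\|_K \le c\,\bigl(\|\b e_\sigma\|_K + \|\b\sigma - \b\sigma_h^*\|_K\bigr).$$
The remaining equilibration error $\|\b\sigma - \b\sigma_h^*\|_K$ is in turn controlled by the local energy contributions on the patch $\widetilde K$ together with $osc_k(f,K)$, through the same analysis that underpins the computable mixed-HDG estimator in Section \ref{sec:mixed}. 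Summing over $K$, invoking shape-regularity to dispose of the $\widetilde K$-patch overlaps, and applying a Young-type absorption to soak up any residual $\|u_h - \widehat u_h\|_{F^*_K}$-factor then produces \eqref{eq2}.
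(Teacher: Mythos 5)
Your overall strategy is the right one and is essentially the paper's: reduce \eqref{eq2} to bounding $\sum_{K}h_K\,a\restrict{K}\|u_h-\widehat u_h\|_{F_K^*}^2$, extract the local identity $\bintK{\alpha_h(u_h-\widehat u_h)}{v}=(f+\divs \b\sigma_h,v)_K$ for $v\in\pol_k(K)$ from \eqref{mixed-hdg}, and test with a function of size $O(h_K^{1/2}\|u_h-\widehat u_h\|_{F_K^*})$ concentrated on $F_K^*$; this is exactly Lemma \ref{lemma:mixed-jmp}. However, two of your steps do not go through as written. The test function you propose does not exist: there is no $v_K\in\pol_k(K)$ with $v_K\restrict{F_K^*}=u_h-\widehat u_h$ and $v_K=0$ on the remaining $d$ facets. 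Such a polynomial is divisible by the product of the $d$ barycentric coordinates of those facets, so it vanishes identically when $k<d$, and for $k\ge d$ its trace on $F_K^*$ is forced to vanish on $\partial F_K^*$, whereas $(u_h-\widehat u_h)\restrict{F_K^*}$ is a general element of $\pol_k(F_K^*)$; a genuine face-bubble lift lands in $\pol_{k+d}(K)$, for which the identity \eqref{single-eq} is not available. The paper circumvents this by prescribing only moments: $z\in\pol_k(K)$ with $(z,w)_K=0$ for $w\in\pol_{k-1}(K)$ and $\bint{z}{\widehat w}{F^*}=\bint{\alpha_h(u_h-\widehat u_h)}{\widehat w}{F^*}$ for $\widehat w\in\pol_k(F^*)$, a square unisolvent system. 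Since $(u_h-\widehat u_h)\restrict{F^*}\in\pol_k(F^*)$, matching moments against $\pol_k(F^*)$ already reproduces the full facet inner product, and with \eqref{mixed-stab2} the uncontrolled behaviour of $z$ on $\dK\setminus F^*$ is harmless because $\alpha_h$ vanishes there.

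Your concluding step is also problematic: you bound $h_K\|\Pi_k f+\divs\b\sigma_h\|_K$ by $\|\b e_\sigma\|_K+\|\b\sigma-\b\sigma_h^*\|_K$ and then invoke the efficiency of the equilibrated-flux estimator, but that efficiency bound (cf.\ \eqref{mm3}) itself contains the very jump term $h_K\bintK{\alpha_h(u_h-\widehat u_h)}{u_h-\widehat u_h}$ you are trying to control, with a generic constant, so the proposed ``Young-type absorption'' cannot be justified. No flux reconstruction is needed here: the standard interior bubble-function argument (estimate \eqref{bubble}) gives $c\,h_K^2\|f+\divs\b\sigma_h\|_K^2\le\|a\,\grads\phi\|_K^2+a\restrict{K}\,osc_k^2(f,K)$ directly, and $\|a^{1/2}\grads\phi\|^2\le\vertiii{(\b e_\sigma,e_u,\widehat e_u)}_{mx}^2$ by the orthogonality \eqref{error-split} of the Helmholtz decomposition. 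With these two repairs your argument coincides with the paper's proof.
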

\begin{proof}
 The result follows at once from Lemma \ref{lemma:mixed-jmp} in Section \ref{sec:proofs} below.
\end{proof}


\subsection{Local conservation}
Similarly to the primal HDG scheme \eqref{primal-hdg},  
the mixed HDG scheme \eqref{mixed-hdg} is  locally conservative but this time in the sense that 
 the numerical flux 
 \[
  \smx: = 
 \b \sigma_h-\alpha_h(u_h-\widehat u_h)\bld n\in [M_{h,k}^\mathrm{dc}]^d
 \]
satisfies
 \begin{align}
  \label{local-c-m}
(f,1)_K+\bintK{\smxn}{1} = 0
 \quad \text{ for each element $K\in\Oh$},
 \end{align}
along with
 \begin{align}
 \label{jmp-cont-m}
\jmp{\smx}\restrict F = 0 \quad 
\text{ on each interior facet } F\in \Eh^o.
 \end{align}
%

\subsection{The computable error bounds}
We are now in a position to 
present computable error bounds for 
the discrete energy error.
While the basic approach is motivated by the technique used in \cite{Ainsworth07b, AinsworthMa12,CockburnZhang14} for the mixed methods, 
the post-processing technique needed for the mixed HDG case is quite different.
\subsubsection{Local (equilibrated) flux post-processing}
Let $\b\sigma_h^*\in \Sigma_{h,k+1}$ satisfy the following conditions,
on each element $K$:
\begin{subequations}
\label{mixed-flux-p}
\begin{alignat}{3}
\label{mixed-flux-p-1}
(\divs \b\sigma_h^*, v)_K = &\; 
-(f , v)_K
&&\quad \forall\, v \in \pol_{k}(K) \text{ and } (v,1)_K=0, \\
\label{mixed-flux-p-2}
\bintK{\b\sigma_h^*\cdot\b n}{\widehat v}
 = &\; 
 \bintF{\smxn}{\widehat v}
 &&\quad \forall\, \widehat v \in \pol_{k+1}(F),\quad \forall F\in \EK,\\
\label{mixed-flux-p-3}
(\b\sigma_h^*, \b\tau)_K = &\; 
(\b\sigma_h, \b\tau)_K &&\quad \forall\, \b\tau \in 
\Sigma_{k+1, sbb},
\end{alignat}
where the divergence-free ``bubble'' space 
$\Sigma_{k+1,sbb}$ is defined as in \eqref{div-free}.
\end{subequations}
Thanks to the local conservation properties \eqref{local-c-p} and \eqref{jmp-cont-p}, we conclude that 
$\b\sigma_h^* \in \Sigma_{h,k+1}\cap H(\mathrm{div};\Omega)$ satisfies
\begin{align}
\label{eq-m}
\divs \b\sigma_h^* = -\Pi_{k}f \text{ on $\Oh$}, 
 \text{ and }
 \b\sigma_h^*\cdot\b n =  \smxn \text{ on $\partial \Oh$}.
\end{align}

\subsubsection{Local potential post-processing}
A global continuous potential is constructed by  
averaging a higher order discontinuous approximation to the potential.
However, the averaging scheme is more involved than the one used in the primal case:
Firstly, we find $u_h^{*,\mathrm{dc}}\in V_{h,k+1}$ so that,
on each element $K$, there holds
\begin{subequations}
\label{mixed-potential-p}
\begin{alignat}{3}
\label{mixed-potential-p-1}
(a\,\grads u_h^{*,\mathrm{dc}}, \grads v)_K = &\; 
(\b\sigma_h ,\grads v)_K
&&\quad \forall\, v \in \pol_{k+1}(K),\\
\label{mixed-potential-p-2}
(u_h^{*,\mathrm{dc}}, 1)_K = &\; 
(u_h, 1)_K,
\end{alignat}
\end{subequations}

The continuous potential post-processing 
$u_h^* =  S_{k+1}(u_h^{*,\mathrm{dc}})\in V_{h,k+1}\cap 
H^1(\Omega)
$
is then a simple averaging of the degrees of freedom for $u_h^{*,\mathrm{dc}}$,
where $S_{k+1}(\cdot)$ is defined as in \eqref{primal-potential-p}.

\subsubsection{Computable error bounds}
Each of the quantities
\begin{subequations}
\label{indicator-m}
\begin{align}
\label{indicator-m-1}
\eta_{CF, K} = &\; 
\|a^{-1/2}(\b\sigma_h^*-\b\sigma_h)\|_K+
\frac{h_K}{\pi}a\restrict K^{-1/2}\, \|f-\Pi_{k}f\|_K,
\\
\label{indicator-m-2}
\eta_{NC, K} = &\; \|a^{-1/2}(\b\sigma_h-a\grads u_h^*)\|_K,
\end{align}
\end{subequations}
can be computed directly from the mixed HDG approximation using only local computations. 
These quantities provide computable, constant-free, upper bounds on the 
the broken energy seminorm of the
error $(\b e_\sigma, e_u,\widehat e_u)$:

\begin{theorem}
\label{thm:mixed}
 Let 
$ \eta_{CF, K}$ and $\eta_{NC, K}$ be defined as in \eqref{indicator-p},  
 with the stabilization parameter $\alpha_h$  chosen to be  either \eqref{mixed-stab1} or 
 \eqref{mixed-stab2}. Then 
\begin{align}
 \label{broken-energy-estimate-m}
 \vertiii{(\b e_\sigma, e_u,\widehat e_u)}_{mx}^2
 \le \eta^2 = \sum_{K\in\Oh} (\eta_{CF,K}^2+\eta_{NC,K}^2).
\end{align}
Moreover, there exists a positive constant $c$,
depending only on the shape-regularity of the mesh, the polynomial degree $k$, and the local permeability ratio 
between neighboring elements,  such that
\begin{align}
 \label{broken-energy-estimate2-m}
c\sum_{K\in\Oh} (\eta_{CF,K}^2+\eta_{NC,K}^2)\le&\; \vertiii{(\b e_\sigma, e_u,\widehat e_u)}_{mx}^2\nonumber\\
&\hspace{-2.5cm}
+\sum_{K\in\Oh} \left(
h_K\bint{\alpha_h(u_h-\widehat u_h)}{u_h-\widehat u_h}{\dK\backslash F_K^*}+
osc_k^2(f,K)\right),
\end{align}
where $F_K^*$ is an arbitrary but fixed facet of $K$.
Furthermore,
\begin{align*}
 \vertiii{(\b e_\sigma, e_u,\widehat e_u)}_{HDG,mx}^2
 \le \eta_{HDG}^2 = \eta^2 +
 \sum_{K\in\Oh}h_K\bintK{\alpha_h( u_h-\widehat u_h)}{ u_h-\widehat u_h}
\end{align*}
with 
\begin{align*}
c \,\eta_{HDG}^2\le 
 \vertiii{(e_u,\widehat e_u)}_{HDG,mx}^2
+\sum_{K\in\Oh} osc_k^2(f,K).
\end{align*}
\end{theorem}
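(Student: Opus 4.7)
I would mimic the conforming/nonconforming splitting used in the proof of Theorem \ref{thm:primal}, with the equilibrated flux $\b\sigma_h^* \in \Sigma_{h,k+1} \cap H(\mathrm{div};\Omega)$ playing the same role as before and the averaged potential $u_h^* \in V_{h,k+1} \cap H^1_0(\Omega)$ now constructed indirectly through the local Neumann problem \eqref{mixed-potential-p} followed by the nodal averaging $S_{k+1}$. For the upper bound on the broken energy seminorm I insert the conforming quantity $a\grads u_h^*$ and apply the triangle inequality: the term $\|a^{-1/2}(a\grads u_h^* - \b\sigma_h)\|$ is exactly the nonconforming contribution $(\sum_{K\in\Oh}\eta_{NC,K}^2)^{1/2}$, while for $\|a^{-1/2}(\b\sigma - a\grads u_h^*)\|$ I invoke the Prager--Synge/hypercircle identity with the conforming pair $(u_h^*, \b\sigma_h^*)$. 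Because $-\divs \b\sigma_h^* = \Pi_k f$ rather than exactly $f$ by \eqref{eq-m}, the identity yields an extra term $(u - u_h^*, f - \Pi_k f)$, which I bound element-wise by the Poincar\'e inequality with sharp constant $h_K/\pi$ (using that $f - \Pi_k f$ is orthogonal to $\pol_k(K)$); this produces precisely the data-oscillation summand of $\eta_{CF,K}$ and matches the leading piece $\|a^{-1/2}(\b\sigma_h^* - \b\sigma_h)\|_K$ of \eqref{indicator-m-1}. The HDG upper bound follows at once: since $e_u - \widehat e_u = -(u_h - \widehat u_h)$, the additional contribution of the stabilization to the squared HDG seminorm equals $\sum_{K\in\Oh} h_K\bintK{\alpha_h(u_h - \widehat u_h)}{u_h - \widehat u_h}$, which is already computable, so I add it to both sides of the broken bound.

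For the local lower bound on $\eta_{CF,K}$, the pivotal identity is $(\b\sigma_h - \b\sigma_h^*)\cdot \b n = -\alpha_h(u_h - \widehat u_h)$ on $\dK$, obtained by combining \eqref{mixed-flux-p-2} with the definition of $\smx$. Together with a discrete inf--sup stability estimate for the BDM-type local system \eqref{mixed-flux-p}, this identifies $\b\sigma_h^* - \b\sigma_h$ as a local lifting of the element residual $f - \Pi_k f$ and the facet jump $\alpha_h(u_h - \widehat u_h)$, yielding
\[
\|a^{-1/2}(\b\sigma_h^* - \b\sigma_h)\|_K^2 \;\lesssim\; \|a^{-1/2}(\b\sigma - \b\sigma_h)\|_K^2 + osc_k^2(f,K) + h_K\bint{\alpha_h(u_h - \widehat u_h)}{u_h - \widehat u_h}{\dK\setminus F_K^*},
\]
where the arbitrariness of $F_K^*$ is exploited to cancel one facet contribution (which vanishes automatically in the single-facet case). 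For $\eta_{NC,K}$ I insert $\pm a\grads u$ and use that $u_h^{*,\mathrm{dc}}$ solves a local Neumann problem driven by $\b\sigma_h$, so by Galerkin orthogonality $a\grads u_h^{*,\mathrm{dc}} - \b\sigma_h$ is the $L^2$-projection of $\b\sigma - \b\sigma_h$ onto $a\grads \pol_{k+1}(K)$ modulo data oscillation; the subsequent averaging $S_{k+1}$ introduces only a controllable patch error bounded by the element-patch discontinuities of $u_h^{*,\mathrm{dc}}$, which in turn are controlled by the HDG jumps $u_h - \widehat u_h$ and hence by the HDG energy error. The HDG energy lower bound then follows in the single-facet case directly from Lemma \ref{lemma:mixed-norm-eq}; in the uniform-stabilization case the same facet identity together with the scaling $\alpha_h\restrict F \simeq a\restrict K$ shows that $h_K\bintK{\alpha_h(u_h-\widehat u_h)}{u_h-\widehat u_h}$ is already absorbed into $\eta_{CF,K}^2$ plus data oscillation.

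The hardest step will be the efficiency of $\eta_{CF,K}$ in the mixed setting, because $\b\sigma_h^* \in \Sigma_{h,k+1}$ lives one polynomial degree above $\b\sigma_h \in \Sigma_{h,k}$, so a naive inverse-inequality argument along the lines of the primal proof does not deliver constants that are independent of $\b\sigma_h$. Overcoming this requires the precise BDM-like characterization \eqref{mixed-flux-p} combined with the identification of $\b\sigma_h^* - \b\sigma_h$ as a lifting of $\alpha_h(u_h - \widehat u_h)$ at the facet level, in the spirit of the mixed-method analyses \cite{AinsworthMa12,CockburnZhang14}; this is the step on which the independence of the constants from $\b\sigma_h$, and from the arbitrary choice of $F_K^*$, ultimately rests.
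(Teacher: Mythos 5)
Your efficiency machinery is essentially the paper's: the facet identity $(\b\sigma_h^*-\b\sigma_h)\cdot\b n=-\alpha_h(u_h-\widehat u_h)$ on $\dK$ coming from \eqref{mixed-flux-p-2}, the BDM-type stability of the local system \eqref{mixed-flux-p} to bound $\|\b\sigma_h^*-\b\sigma_h\|_K$ by the element residual and the facet jumps, the bubble-function bound for $h_K\|f+\divs\b\sigma_h\|_K$, and a separate argument to drop the facet $F_K^*$ (the paper's Lemma \ref{lemma:mixed-jmp}, which tests the local equation \eqref{single-eq} with a function $z$ whose boundary moments are supported on $F^*$) all appear in Section \ref{sec:proofs}. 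Your treatment of $\eta_{NC,K}$ is looser than the paper's Lemma \ref{lemma:mixed-nonconforming} -- there the jumps of $u_h^{*,\mathrm{dc}}$ are controlled directly by $\|\grads u-a^{-1}\b\sigma_h\|$ on the facet patch via a Raviart--Thomas lifting with vanishing normal trace, not via the HDG stabilization jumps -- but the idea is in the right direction.

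The genuine gap is in the reliability bound. You split $\b\sigma-\b\sigma_h=(\b\sigma-a\grads u_h^*)+(a\grads u_h^*-\b\sigma_h)$ by the triangle inequality and invoke Prager--Synge for the first piece. But Prager--Synge returns $\|a^{-1/2}(\b\sigma_h^*-a\grads u_h^*)\|$, which must be split \emph{again} by the triangle inequality to reach the term $\|a^{-1/2}(\b\sigma_h^*-\b\sigma_h)\|$ appearing in \eqref{indicator-m-1}; the net estimate is of the form $\vertiii{(\b e_\sigma,e_u,\widehat e_u)}_{mx}\le \eta_{CF}+2\eta_{NC}+\mathrm{osc}$, which after squaring gives $\vertiii{\cdot}_{mx}^2\le C\eta^2$ with $C>1$, not the constant-free bound \eqref{broken-energy-estimate-m}. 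The missing ingredient is the orthogonal Helmholtz decomposition of Lemma \ref{lemma:helmholtz}: the paper applies it with $\b\tau=a\grads u-\b\sigma_h$, obtains the exact Pythagorean identity $\vertiii{(\b e_\sigma,e_u,\widehat e_u)}_{mx}^2=\|a^{1/2}\grads\phi\|^2+\|a^{-1/2}\curls\psi\|^2$, and then bounds the conforming part by $\sum_K\eta_{CF,K}^2$ (using the conservation property \eqref{jmp-cont-m}, the equilibration \eqref{eq-m}, and the Poincar\'e inequality for the oscillation term) and the curl part by $\sum_K\eta_{NC,K}^2$ (using $(\grads v,\curls\psi)=0$ for $v\in H^1_0(\Omega)$ and inserting $\grads u_h^*$). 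It is this orthogonality that lets the two indicators add in $\ell^2$ with constant one; your route cannot recover it, so the first and third displayed inequalities of the theorem as stated would not follow.
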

The proof of Theorem \ref{thm:mixed} is presented in Section \ref{sec:proofs}.

\begin{remark}[Single-facet stabilization]
If the stabilization parameter is chosen as in \eqref{mixed-stab2}, then we can take 
 $F_K^*\in\EK$ in \eqref{broken-energy-estimate-m} to be the unique facet 
on which $\alpha_h$ is non-zero which implies that
\[
\sum_{K\in\Oh} h_K\bint{\alpha_h(u_h-\widehat u_h)}{u_h-\widehat u_h}{\dK\backslash F_K^*} = 0.
\]
Consequently, for the choice \eqref{mixed-stab2},
the estimator $\eta^2$ also 
gives a lower bound for the broken energy seminorm.
\end{remark}


\section{Numerical examples}
\label{sec:numerics}
In order to illustrate the results in Theorem \ref{thm:primal}--\ref{thm:mixed}, 
we consider Poisson problems in two and three dimensions approximated using the primal and mixed HDG schemes \eqref{primal-hdg} and \eqref{mixed-hdg}.
The implementation is performed using  the Python interface of the NGSolve software \cite{Schoberl97,Schoberl16}.
Conveniently, NGSolve provides 
a set of basis functions for the divergence-free bubble space $\Sigma_{k,sbb}$ \eqref{div-free} \cite{Zaglmayr06}
which makes the implementation of the equilibrated flux reconstructions \eqref{primal-flux-p} and \eqref{mixed-flux-p}  relatively straightforward.

\newcommand{\prpkI}{pr-P$_k$}
\newcommand{\prpkO}{pr-P$_k^{\mathrm{red}}$}
\newcommand{\mxpkI}{mx-P$_k$-U}
\newcommand{\mxpkO}{mx-P$_k$-S}

We choose the stabilization parameter for the primal HDG schemes \eqref{primal-hdg} to be 
\[
\alpha_h\restrict{F} =10 k^2 h_F^{-1}\quad \forall F\in \Eh
\]
{which, thanks to Lemma \ref{lemma:primal-stab}, ensures well-posedness on shape-regular meshes}.
We adopt a shorthand notation and denote the primal HDG scheme \eqref{primal-hdg} used  in 
conjunction with the approximation space  $\CV_{h,k,0}$ as \prpkI, 
{while if with the approximation space  $\CV_{h,k,1}$ used we use the notation: \prpkO.}  
Likewise, we denote the mixed HDG scheme \eqref{mixed-hdg} used in conjunction with uniform stabilization \eqref{mixed-stab1} as 
\mxpkI, while if the single-facet stabilization \eqref{mixed-stab2} is used, we write \mxpkO.
In all cases we use static condensation whereby the local, cell-wise, degrees of freedom 
are eliminated leaving only those degrees of freedom  which are located on the mesh skeleton. 

We take polynomial degree $k\in\{1,2,3,4\}$ for the first example, and 
$k\in\{1,2,3\}$ for the second example.

\subsection{Example 1: Two-dimensional L-shaped problem}
Here we consider the Laplace problem on a planar L-shaped domain $\Omega_{2D} = (-1, 1)\times (0,1)\cup (-1,0)\times (-1,0]$
with Dirichlet boundary conditions. 
The initial mesh is shown in Fig. \ref{fig1}(A).
The true solution is given by $u(r,\theta) = r^{2/3}\sin(2\theta/3)$ in polar coordinates.
\begin{figure}[!ht]
\subfloat[Example 1]{\includegraphics[width=0.45\textwidth]{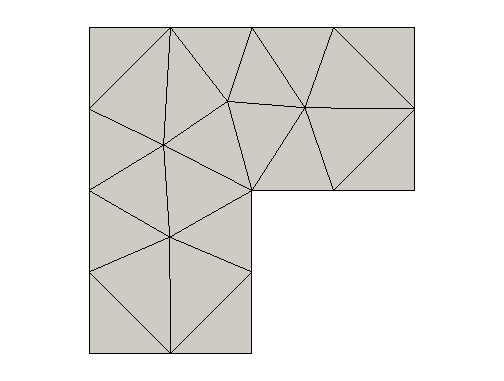}}
\subfloat[Example 2]{\includegraphics[width=0.45\textwidth]{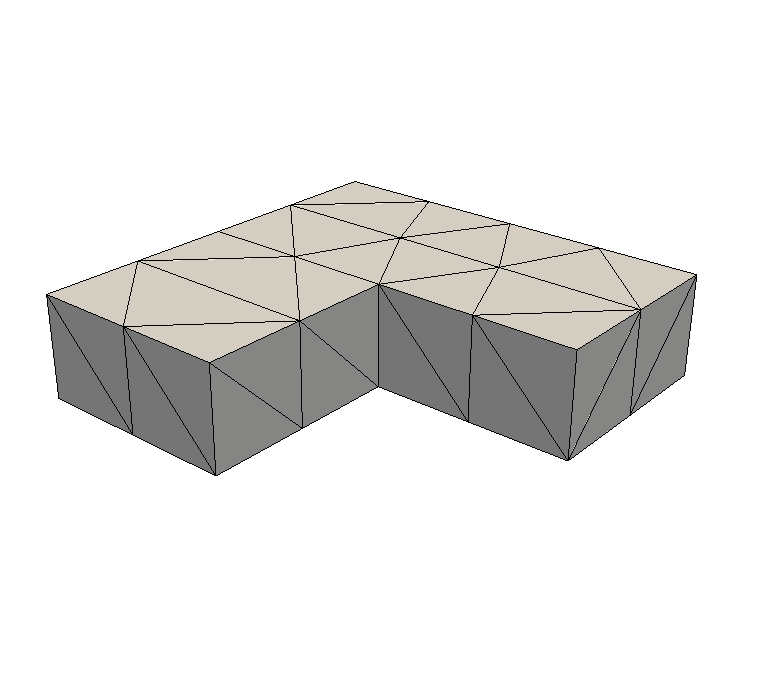}}
\caption{Initial meshes used in numerical examples.}
 \label{fig1}
\end{figure}

The sequence of meshes was constructed by selecting for refinement  the smallest number of elements 
whose combined contribution toward the estimator of the broken energy seminorm of the error exceeds half 
of the total estimated error. A sample of the meshes for the \prpkI\, scheme with $k = 1$ and $k = 4$
is shown in Fig. \ref{fig2}.
\begin{figure}[!ht]
\includegraphics[width=0.32\textwidth]{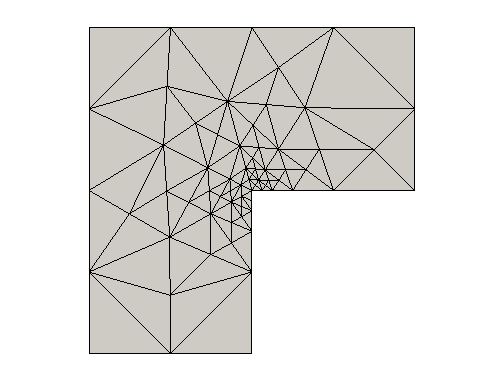}
\includegraphics[width=0.32\textwidth]{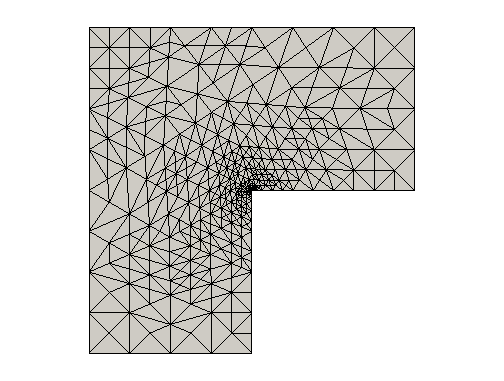}
\includegraphics[width=0.32\textwidth]{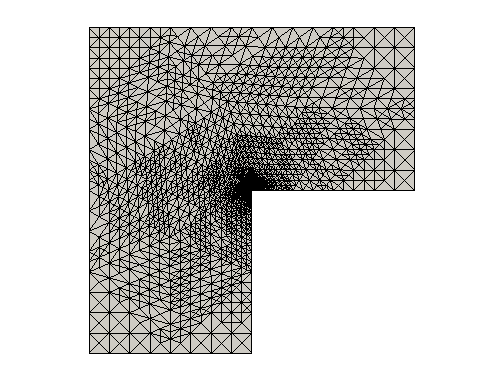}
\includegraphics[width=0.32\textwidth]{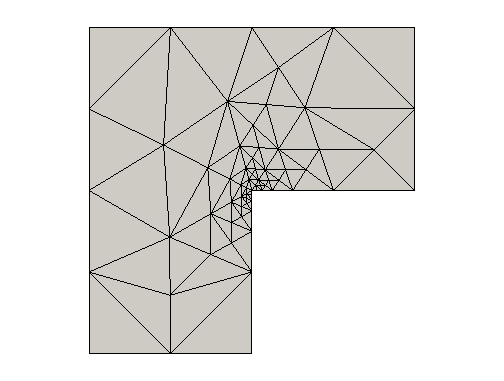}
\includegraphics[width=0.32\textwidth]{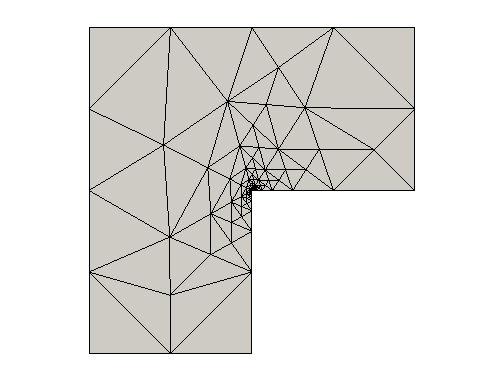}
\includegraphics[width=0.32\textwidth]{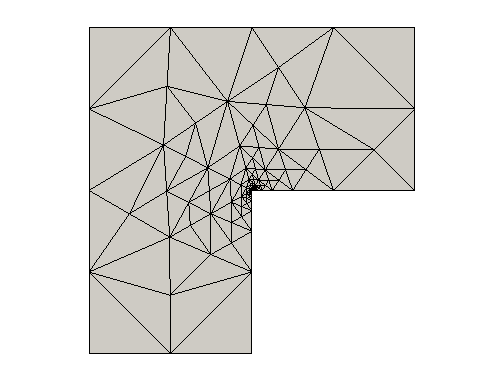}
\caption{The 4th, 9th and 13th meshes obtained performing adaptive refinement for Example 1.
Top: \prpkI\, scheme with $k=1$; Bottom: \prpkI\, scheme with $k=4$.}
 \label{fig2}
\end{figure}

In Fig. \ref{fig6}, 
we plot the error for the primal scheme in the broken energy seminorm \eqref{broken-h1} against the total number of degrees of freedom $\dim \CV_{h,k,\delta}$ and { the number of global  degrees of freedom, $\dim M_{h,k-\delta}^0$, remaining after local variables have been eliminated.}
The error in the broken energy seminorm \eqref{broken-h1-m} for the mixed HDG schemes is shown in Fig. \ref{fig7}.
\begin{figure}[!ht]
\includegraphics[width=\textwidth]{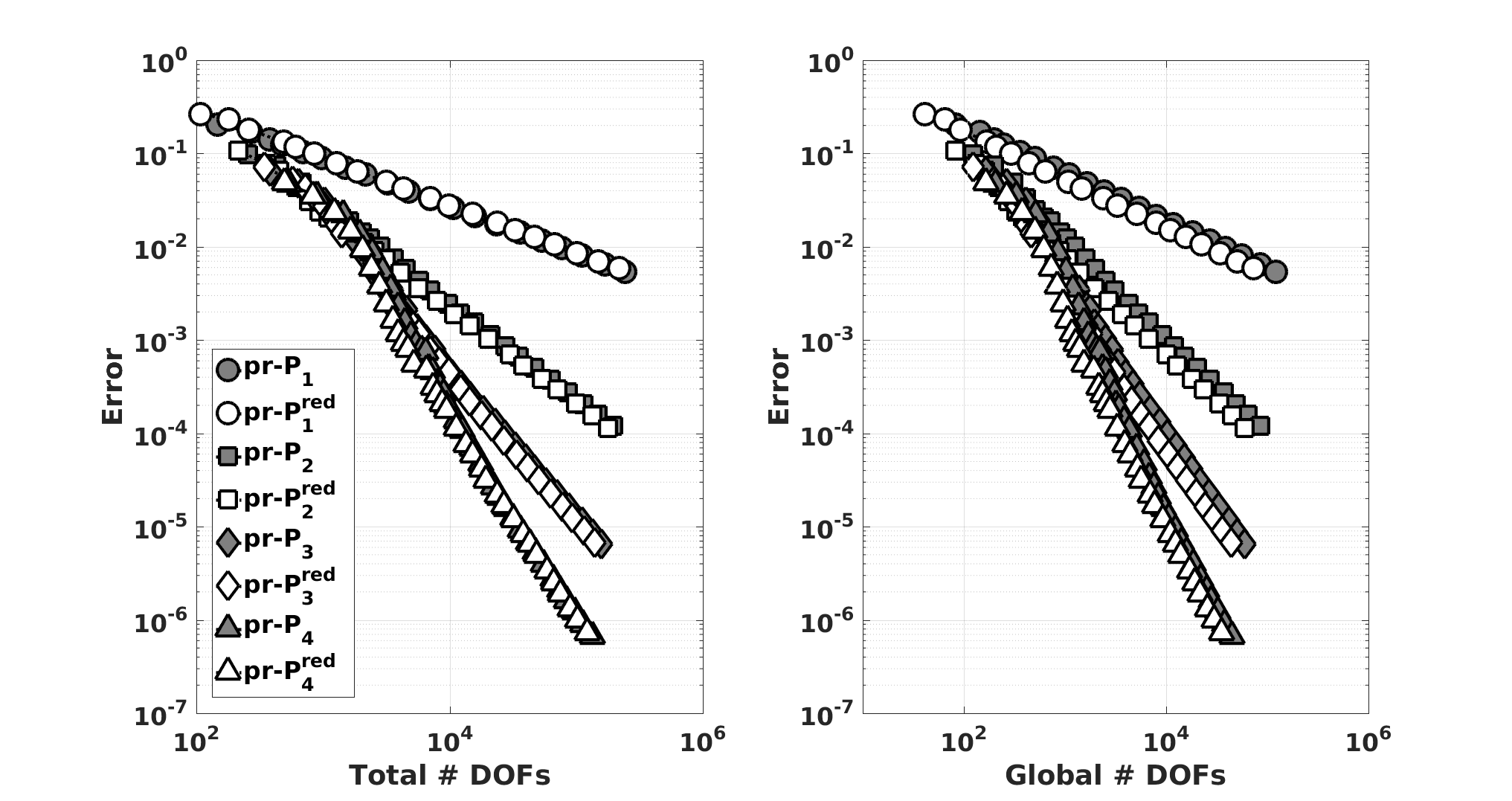}
\caption{Convergence history of the broken energy seminorm error for primal HDG schemes.
Left: error against total number of DOFs; Right: error against number of global skeleton DOFs.}
 \label{fig6}
\end{figure}
\begin{figure}[!ht]
\includegraphics[width=1.0\textwidth]{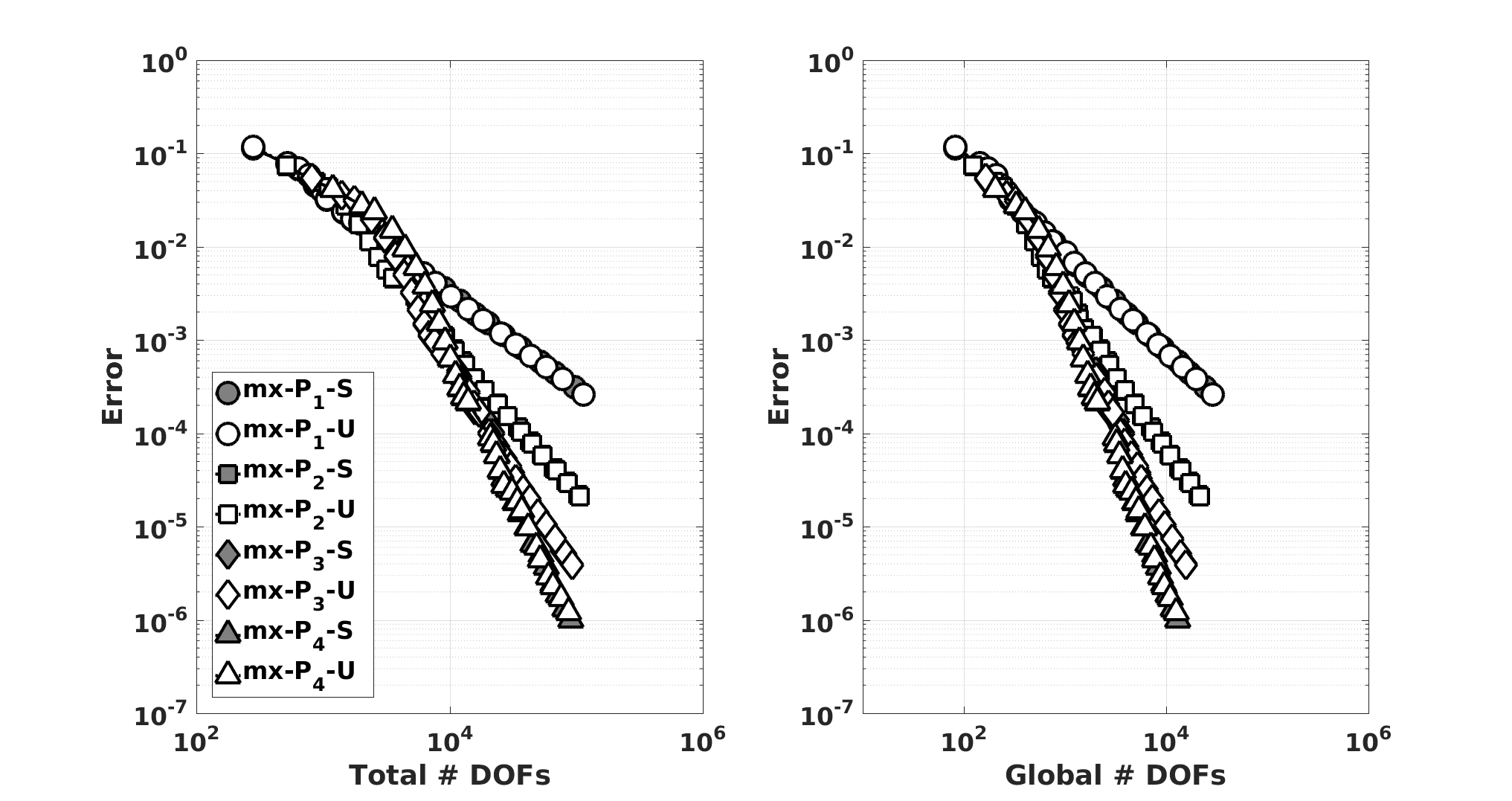}
\caption{Convergence history of the broken energy seminorm error for mixed HDG schemes.
Left: error against total number of DOFs; Right: error against number of global skeleton DOFs.}
 \label{fig7}
\end{figure}
In all cases, the effectivity indices are found to lie in the range $1.0$--$3.0$ as shown in Fig \ref{fig8}.
\begin{figure}[!ht]
\includegraphics[width=1\textwidth]{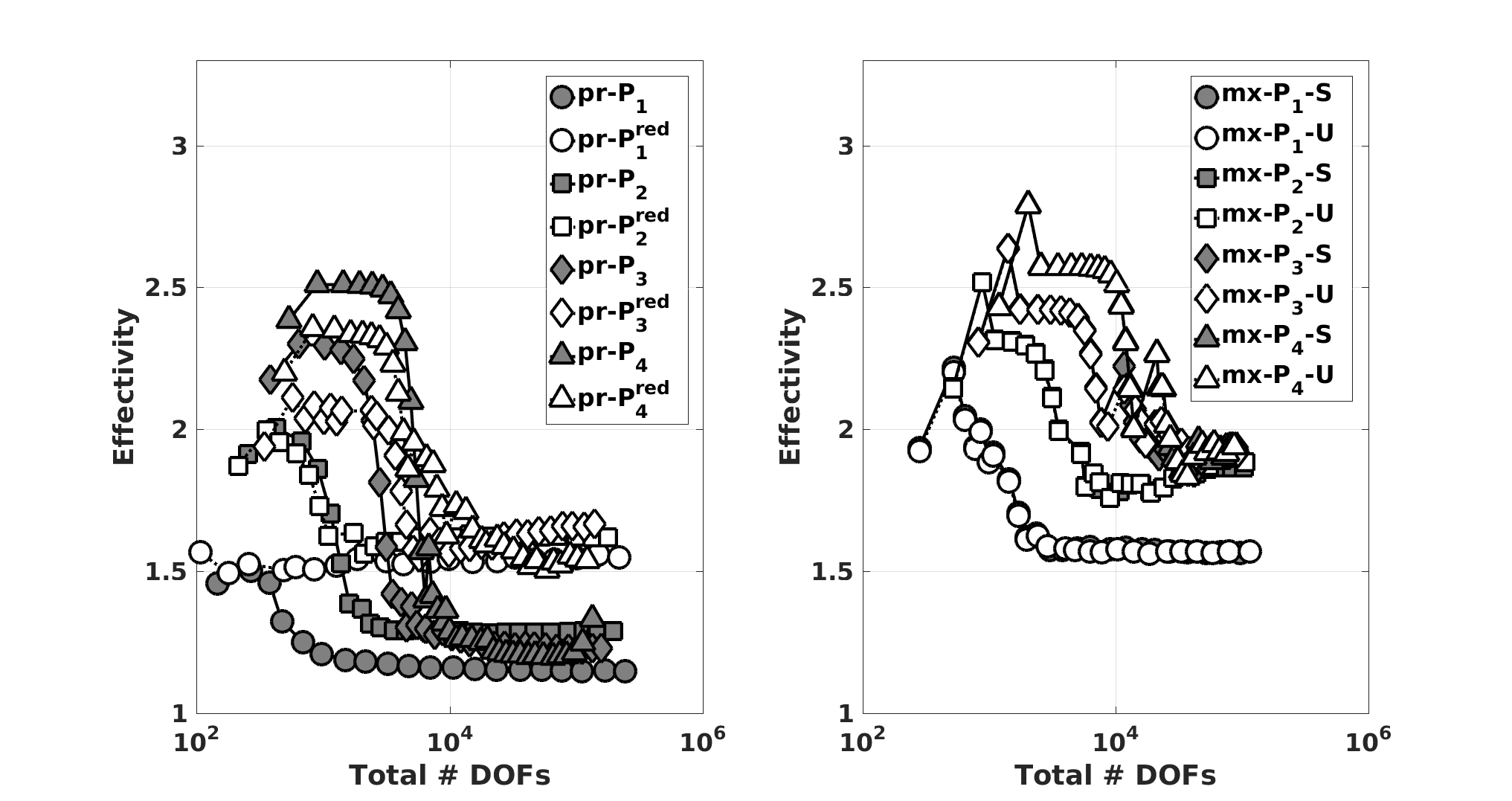}
\caption{History of effectivity indices $\eta/\text{error}$.
Left: primal HDG schemes; Right: mixed HDG schemes.}
 \label{fig8}
\end{figure}

\subsection{Example 2: Three-dimensional L-shaped problem}
Here we consider the Laplace problem on a three-dimensional L-shaped domain 
$\Omega_{3D} =\Omega_{2D}\times (0,0.5)$, where $\Omega_{2D}$ is the two-dimensional L-shaped domain in the previous example.
The initial mesh is shown in Fig. \ref{fig1}(B).
The true solution is independent of $z$, and reduces to the 
same two-dimensional solution as Example 1.
However, the fact that the true solution is independent of $z$ is not used in the finite element analysis and the meshes are unstructured through the thickness.

A sample of the meshes obtained for adaptive solution using the  \prpkI\, scheme with $k = 1$ and $k = 3$
is shown in Fig. \ref{fig3}.
The errors for the primal and mixed HDG schemes are plotted in  Fig. \ref{fig9} and Fig. \ref{fig10}, respectively.
As before, the effectivity indices are found to vary in the range $1.0$--$3.0$ as shown in Fig. \ref{fig11}.

\begin{figure}[!ht]
\includegraphics[width=0.45\textwidth]{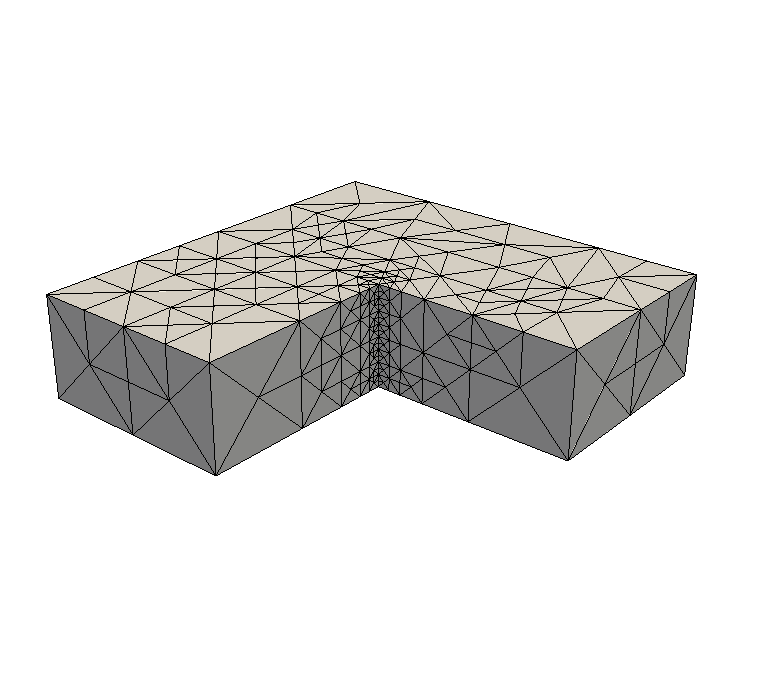}
\includegraphics[width=0.45\textwidth]{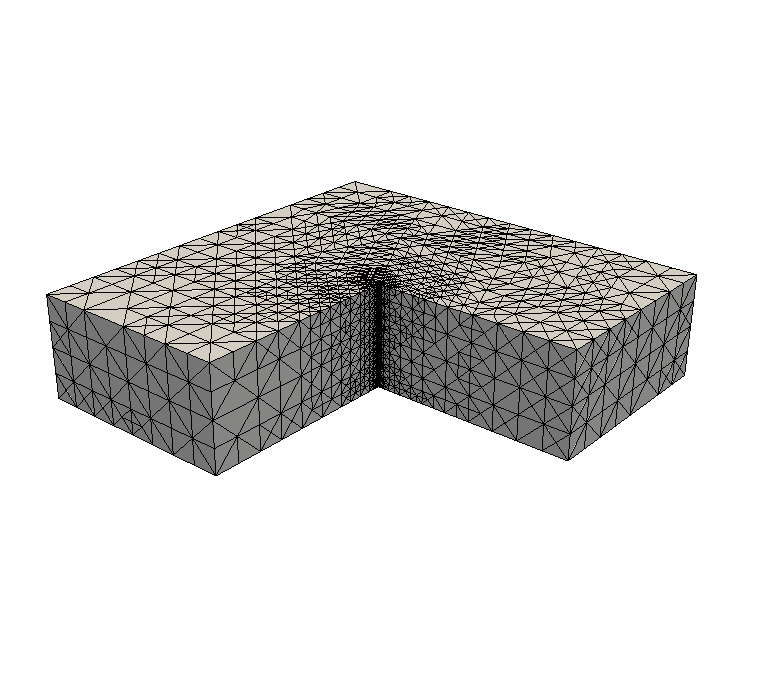}
\\
\vspace{-1.5cm}
\includegraphics[width=0.45\textwidth]{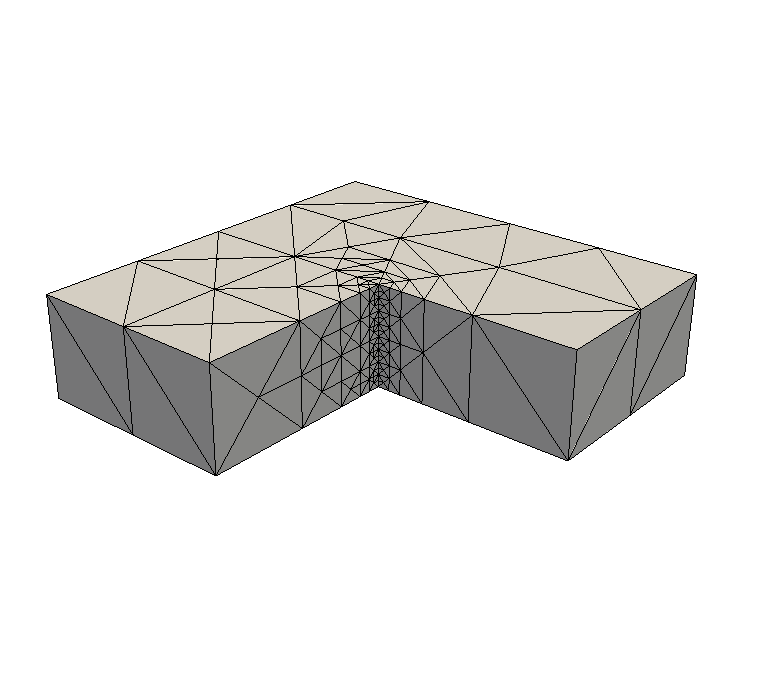}
\includegraphics[width=0.45\textwidth]{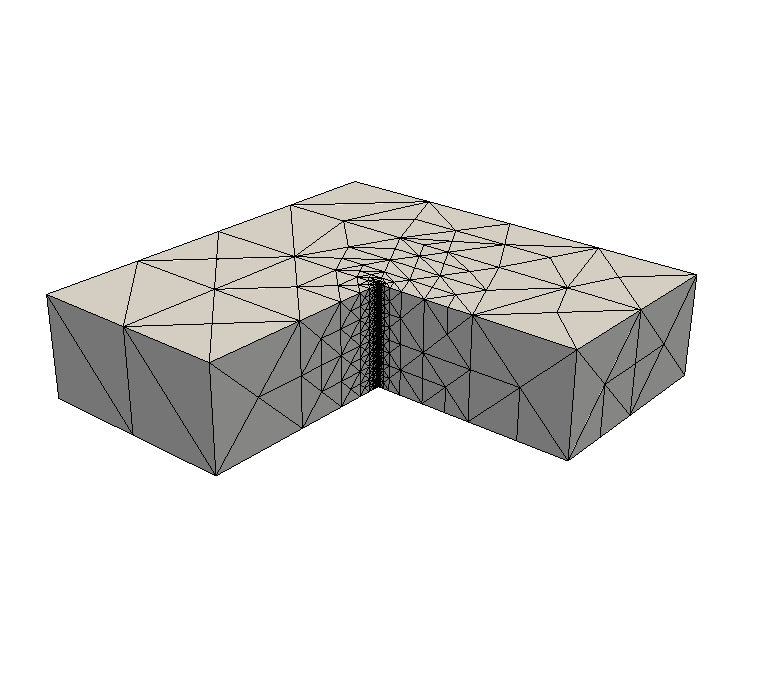}
\caption{The 4th and 7th adaptively refined meshes for Example 2.
Top: \prpkI\, scheme with $k=1$; Bottom:  \prpkI\, scheme with $k=3$.}
 \label{fig3}
\end{figure}
\begin{figure}[!ht]
\includegraphics[width=\textwidth]{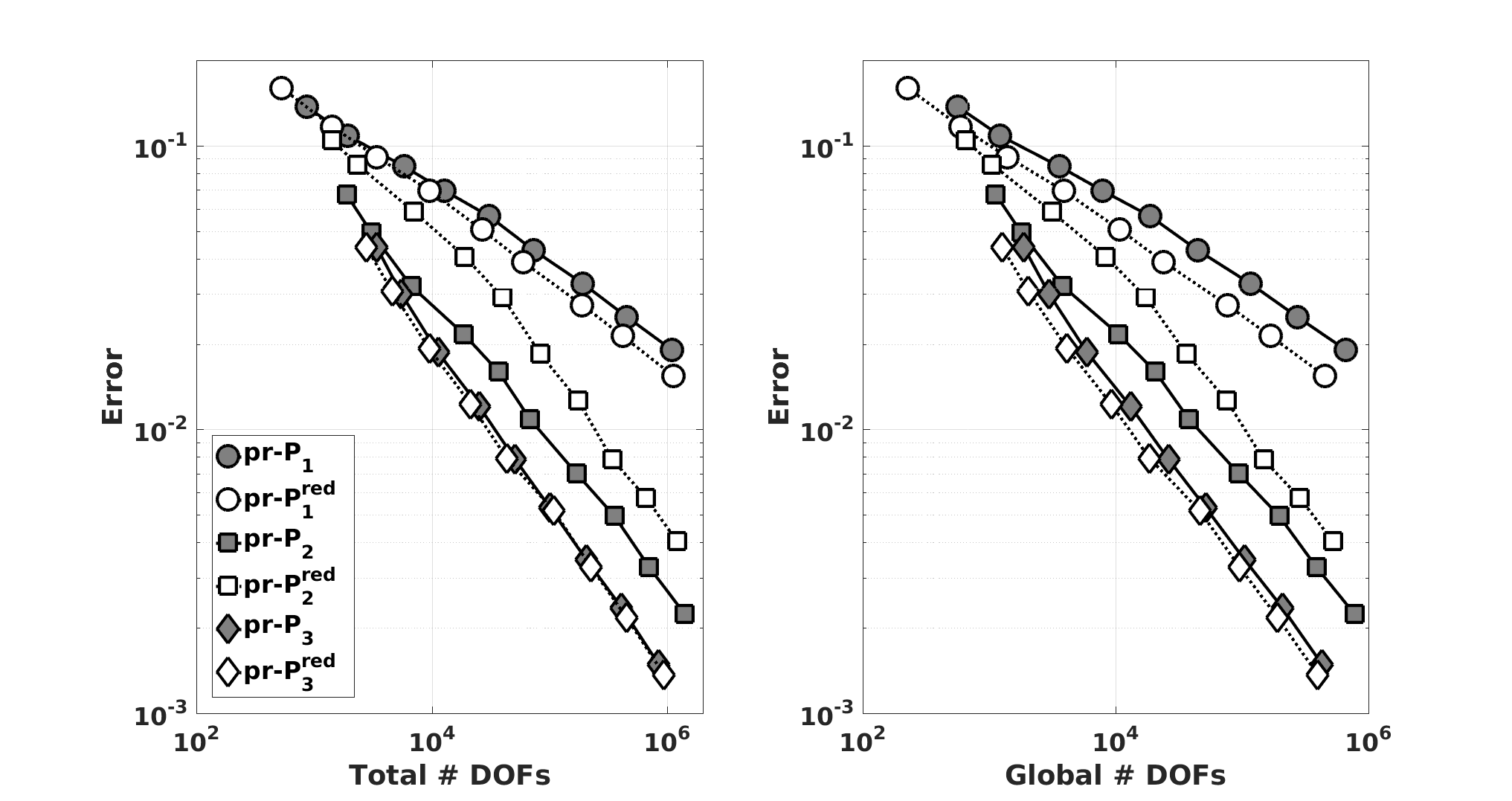}
\caption{Convergence history of the broken energy seminorm error for primal HDG schemes.
Left: error against total number of DOFs; Right: error against number of global DOFs.}
 \label{fig9}
\end{figure}
\begin{figure}[!ht]
\includegraphics[width=1.0\textwidth]{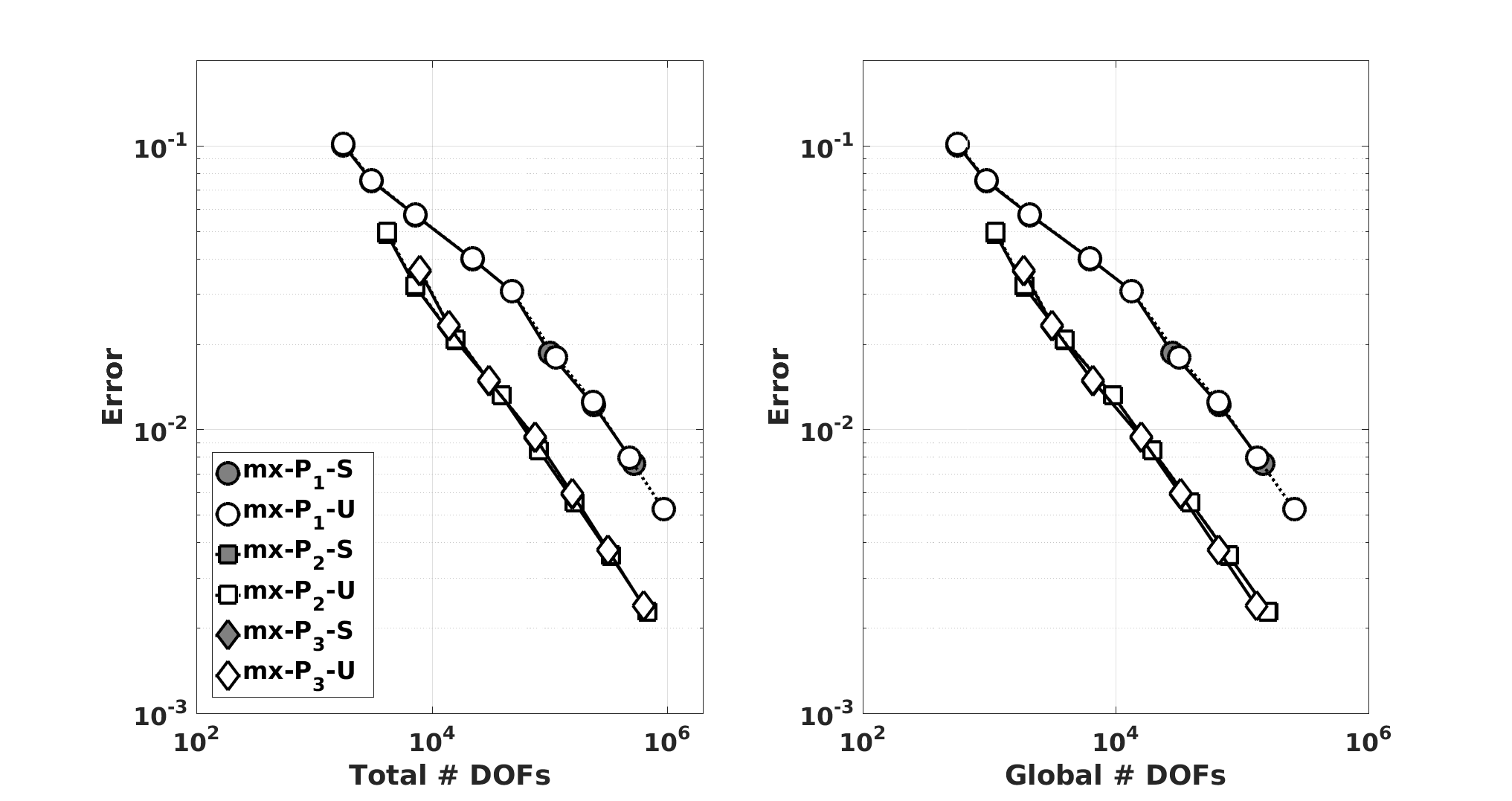}
\caption{Convergence history of the broken energy seminorm error for mixed HDG schemes.
Left: error against total number of DOFs; Right: error against  number of global DOFs.}
 \label{fig10}
\end{figure}
\begin{figure}[!ht]
\includegraphics[width=1\textwidth]{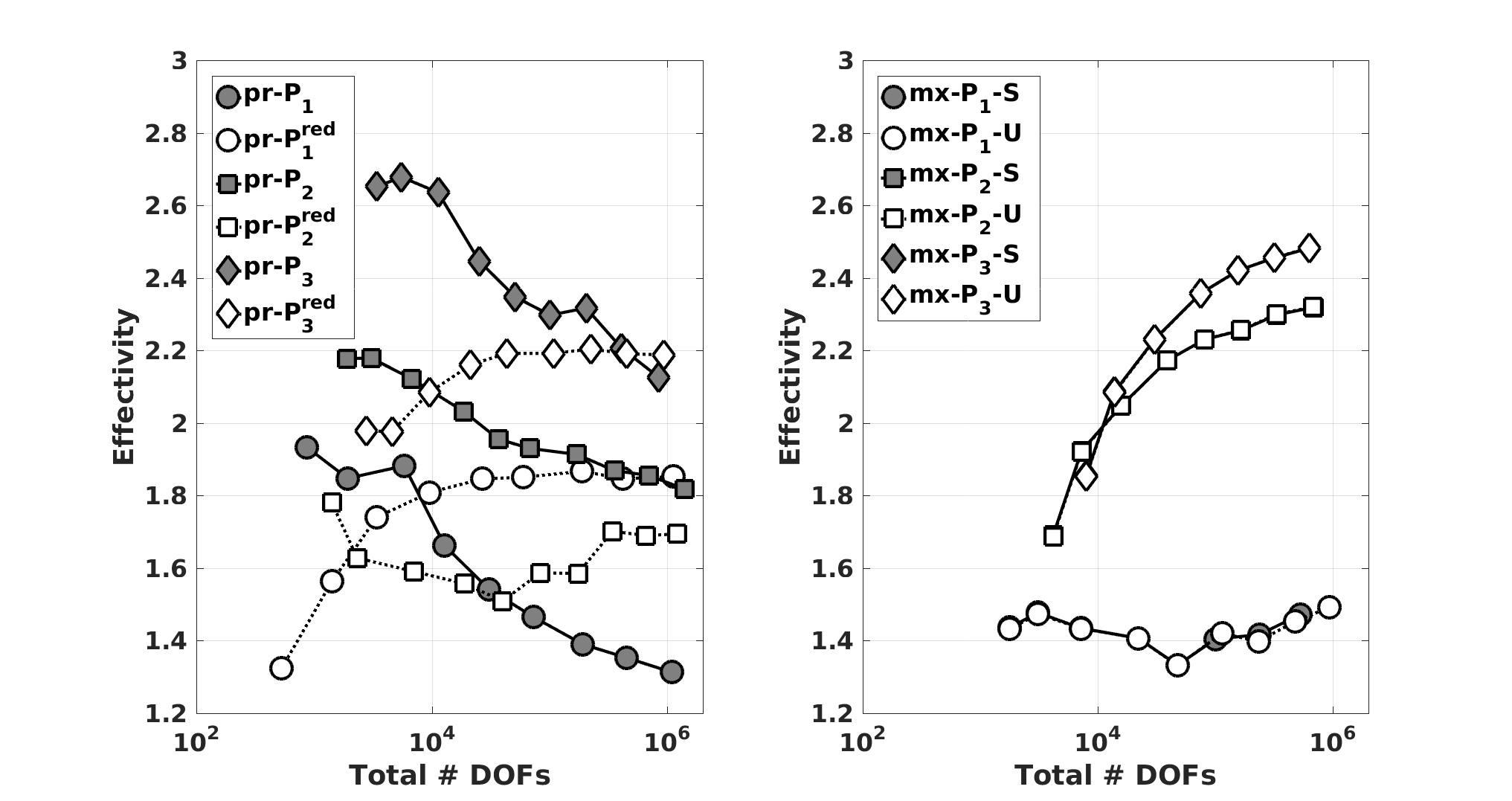}
\caption{History of effectivity indices $\eta/\text{error}$.
Left: primal HDG schemes; Right: mixed HDG schemes.}
 \label{fig11}
\end{figure}

\section{Proofs}
\label{sec:proofs}
{
We now turn to the proofs of the results.

As remarked earlier, the jump term 
in the HDG energy (semi)norm is directly computable, and, as such, 
we need only concern ourselves with obtaining estimates for
the broken energy seminorm of the error. 
To this end, recall the following Helmholtz decomposition \cite{GiraultRaviart86, DariDuranPadraVampa96}:
\begin{lemma}
 \label{lemma:helmholtz}
 Let $\Omega$ be a simply connected polygon/polyhedron.
Then, any $\b \tau\in L^2(\Omega)^d$, $d\in\{2,3\}$,
can be written in the form 
\begin{align}
\label{helmholtz}
 \b \tau = a\grads \phi + \curls \psi,
\end{align}
where $\phi\in H^1_0(\Omega)$ satisfies
\begin{align}
 \label{conforming}
 (a\grads \phi, \grads v) =  (\b \tau, \grads v)\quad\forall v\in H^1_0(\Omega).
\end{align}
and $\psi\in H^1(\Omega)^{2d-3}$ satisfies
\begin{align}
 \label{nonconforming}
 (a^{-1}\curls \psi, \curls \psi) =  
 (a^{-1}\b \tau, \curls \psi).
\end{align}
Moreover, the decomposition is orthogonal
\begin{align}
 \label{error-split}
 \|a^{-1/2}\b \tau\|^2
  = 
  \|a^{1/2}\grads \phi\|^2+
  \|a^{-1/2}\curls \psi\|^2.
\end{align}
\end{lemma}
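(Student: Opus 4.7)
The plan is to split the construction into three steps. First I would define $\phi \in H^1_0(\Omega)$ as the Lax-Milgram solution of \eqref{conforming}; continuity and coercivity of the bilinear form $(a\grads\cdot, \grads\cdot)$ on $H^1_0(\Omega)$ follow immediately from the fact that $a \in L^\infty(\Omega)$ is bounded away from zero and from infinity, while the right-hand side functional $v \mapsto (\bld\tau, \grads v)$ is bounded by Cauchy--Schwarz. This produces the conforming part of the decomposition and fully determines $\phi$.

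Next I would set $\bld\eta := \bld\tau - a\grads\phi \in L^2(\Omega)^d$ and observe, by testing \eqref{conforming} against $v \in \mathcal{D}(\Omega) \subset H^1_0(\Omega)$, that $\divs \bld\eta = 0$ in the distributional sense. Since $\Omega$ is simply connected (and Lipschitz), a classical vector-potential result---see, e.g., Theorems~3.4 and~3.5 of Chapter~I of \cite{GiraultRaviart86}---provides the existence of some $\psi \in H^1(\Omega)^{2d-3}$ (a scalar stream function when $d=2$, a vector potential when $d=3$) such that $\bld\eta = \curls \psi$. This existence step, which relies essentially on the topological assumption and on appropriate gauge-fixing to secure $H^1$ regularity of $\psi$, is the one nontrivial ingredient of the proof and is where I would expect any technical difficulty to reside.

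Once the decomposition $\bld\tau = a\grads\phi + \curls\psi$ is in hand, the remaining claims reduce to the orthogonality $(\grads\phi, \curls\psi) = 0$. This identity follows by integrating by parts: using $\divs(\phi \curls\psi) = \grads\phi \cdot \curls\psi + \phi\,\divs \curls\psi$ together with $\divs \curls\psi = 0$ and the homogeneous boundary condition $\phi|_{\partial\Omega} = 0$, the integral reduces to a vanishing boundary term. Substituting $\bld\tau = a\grads\phi + \curls\psi$ into the right-hand side of \eqref{nonconforming} and discarding this null cross term yields \eqref{nonconforming}. Finally, expanding
\[
\|a^{-1/2}\bld\tau\|^2 = (a\grads\phi, \grads\phi) + 2(\grads\phi, \curls\psi) + (a^{-1}\curls\psi, \curls\psi)
\]
and again using $(\grads\phi, \curls\psi) = 0$ delivers the Pythagorean decomposition \eqref{error-split}.
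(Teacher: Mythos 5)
Your argument is correct and is exactly the standard construction behind this lemma; the paper itself offers no proof, simply recalling the result from \cite{GiraultRaviart86,DariDuranPadraVampa96}, whose proofs proceed precisely as you do (Lax--Milgram for $\phi$, the Girault--Raviart vector-potential/stream-function theorem for the divergence-free remainder, and integration by parts for orthogonality). The only point worth flagging is that the $H^1$ regularity of the vector potential in 3D is the genuinely delicate step, as you correctly identify, and it does hold for simply connected Lipschitz (hence polyhedral) domains via the divergence-free extension to a ball.
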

We shall use the decomposition \eqref{helmholtz} in conjunction with $\b\tau$ defined elementwise by  $a\grads u -a\grads u_h$  for the primal HDG scheme \eqref{primal-hdg}, or by
$a\grads u-\b\sigma_h$ for the mixed HDG scheme \eqref{mixed-hdg}.
In both cases, Lemma \ref{lemma:helmholtz} gives an orthogonal decomposition of the broken energy seminorm error into the sum of a conforming part $\|a^{1/2}\grads \phi\|^2$ and a nonconforming part 
$\|a^{-1/2}\curls \psi\|^2$. It then suffices to obtain an a posteriori error bound for each part separately and sum to obtain an estimator for the total error.

\subsection{Proof of Theorem \ref{thm:primal}}
The proof of Theorem \ref{thm:primal} follows from 
\cite{AinsworthRankin11b} for the symmetric interior penalty discontinuous Galerkin methods almost verbatim.
Specifically, the upper bound \eqref{broken-energy-estimate} in Theorem \ref{thm:primal} follows from \eqref{ll1} and \eqref{ll2} below, and the lower bound \eqref{broken-energy-estimate2} follows from Lemma \ref{lemma:primal-norm-eq}, 
\eqref{ll3} and \eqref{ll4}.

The following estimates follows from  results in  
\cite[Lemma 6.2-6.5]{AinsworthRankin11b} using the proof in \cite[Section 6]{AinsworthRankin10a}.
 Let $\phi$ and $\psi$ be taken as  in  \eqref{helmholtz} 
 in the case where $\b\tau = a\grads u - a\grads u_h$, 
 and let 
 $\eta_{CF,K}$ and $\eta_{NC,K}$ be given by \eqref{indicator-p}.
Then,
\begin{subequations}
\label{ll}
\begin{align}
\label{ll1}
 \|a^{1/2}\grads \phi\|^2 \le &\;\sum_{K\in\Oh} \eta_{CF,K}^2,\\
\label{ll2}
 \|a^{-1/2}\curls \psi\|^2 \le &\;\sum_{K\in\Oh} \eta_{NC,K}^2.
\end{align}
Moreover,  there exists a positive constant $c$, 
depending only on the shape-regularity of the mesh and the polynomial degree $k$,  such that
\begin{align}
\label{ll3}
c\,\eta_{CF,K}^2\le &\;
a\,h_K^{-1}\|P_M(u_h-\widehat u_h)\|_{\dK}^2\nonumber\\
&+
\|a^{1/2}\grads \phi\|_{K}^2+osc_{k-1}^2(f, K),\\
\label{ll4}
c\,\eta_{NC,K}^2\le &\;
\sum_{F\in \widetilde{\mathcal{E}}_K}a\,{h_F^{-1}}{\left\|\jmp{u_h}\right\|_F^2},
\end{align} 
\end{subequations}
where 
$\widetilde{\mathcal{E}}_K
=\{F\in\partial \Oh: 
\overline F \cap \overline K \text{ is nonempty}\,
\}$.

\subsection{Proof of Theorem \ref{thm:mixed}}
Theorem \ref{thm:mixed} is a consequence of the  following three lemmas:

\begin{lemma}
 \label{lemma:mixed-key}
  Let $\phi$ and $\psi$ be given by in the  decomposition \eqref{helmholtz}, where $\b\tau$ is chosen to be $\b\tau = a\grads u -\b\sigma_h$, 
 and let 
 $\eta_{CF,K}$ and $\eta_{NC,K}$ be given by \eqref{indicator-m}.
Then, 
\begin{subequations}
\label{mm}
\begin{align}
\label{mm1}
 \|a^{1/2}\grads \phi\|^2 \le &\;\sum_{K\in\Oh} \eta_{CF,K}^2,\\
\label{mm2}
 \|a^{-1/2}\curls \psi\|^2 \le &\;\sum_{K\in\Oh} \eta_{NC,K}^2.
\end{align}
Moreover,  there exists a positive constant $c$, 
depending only on the shape-regularity of the mesh and the polynomial degree $k$, 
such that 
\begin{align}
\label{mm3}
c\,\eta_{CF,K}^2\le &\;
h_K \bintK{\alpha_h(u_h-\widehat u_h)}{u_h-\widehat u_h}\nonumber\\
&+\|a^{1/2}\grads \phi\|_{K}^2+osc_k^2(f, K),\\
\label{mm4}
c\,\eta_{NC,K}^2\le &\;
a|_K\left(\|\grads u_h^{*,\mathrm{dc}}-a^{-1}\b\sigma_h\|_K^2+
\sum_{F\in \widetilde{\mathcal{E}}_K}{\,h_F^{-1}}{\left\|\jmp{u_h^{*,\mathrm{dc}}}\right\|_F^2}\right),
\end{align} 
\end{subequations}
where 
$\widetilde{\mathcal{E}}_K
=\{F\in\partial \Oh: 
\overline F \cap \overline K \text{ is nonempty}\,
\}.$
\end{lemma}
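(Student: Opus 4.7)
The plan is to attack the four bounds in two groups: the upper bounds \eqref{mm1}--\eqref{mm2} are obtained by a Helmholtz-based Galerkin-type argument using the equilibration properties \eqref{eq-m} of $\b\sigma_h^*$ and the conformity of $u_h^*$, while the lower bounds \eqref{mm3}--\eqref{mm4} are local scaling/inverse-estimate arguments that rely on the defining properties \eqref{mixed-flux-p} and \eqref{mixed-potential-p} of the post-processed quantities. Throughout, $\b\tau = a\grads u - \b\sigma_h$ is split as in Lemma \ref{lemma:helmholtz}.

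For \eqref{mm1}, I would test the defining equation for $\phi$ against itself to obtain $\|a^{1/2}\grads\phi\|^2 = (a\grads u - \b\sigma_h,\grads\phi)$, then insert $\pm\b\sigma_h^*$ to split the right side. Because $\phi \in H^1_0(\Omega)$ and $\b\sigma_h^* \in H(\mathrm{div};\Omega)$ with $\divs\b\sigma_h^* = -\Pi_k f$ by \eqref{eq-m}, integration by parts together with the original PDE reduces the first piece to $(f - \Pi_k f,\phi) = \sum_K (f-\Pi_k f,\phi - \overline{\phi}_K)_K$ where $\overline{\phi}_K$ is the mean; a Poincar\'e inequality with constant $h_K/\pi$ on each element then produces the oscillation contribution in $\eta_{CF,K}$, and Cauchy--Schwarz on the remainder $(\b\sigma_h^* - \b\sigma_h,\grads\phi)$ yields the equilibrated flux contribution. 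For \eqref{mm2}, test the defining equation for $\psi$ against itself and rewrite $\b\tau = a\grads(u - u_h^*) + (a\grads u_h^* - \b\sigma_h)$; since $u - u_h^* \in H^1_0(\Omega)$ and $\psi \in H^1(\Omega)^{2d-3}$, the identity $(\grads v, \curls\psi) = 0$ for $v \in H^1_0(\Omega)$ eliminates the first term, and Cauchy--Schwarz on the remainder delivers \eqref{mm2}.

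For \eqref{mm3}, I would study the local problem satisfied by $\delta\b\sigma := \b\sigma_h^* - \b\sigma_h$ on each $K$. Since $\smxn = \b\sigma_h\cdot\b n - \alpha_h(u_h - \widehat u_h)$, the boundary condition \eqref{mixed-flux-p-2} gives $\delta\b\sigma\cdot\b n|_F = -\alpha_h(u_h - \widehat u_h)$ on each $F \in \EK$; combined with the mean-zero divergence constraint \eqref{mixed-flux-p-1} and the bubble orthogonality \eqref{mixed-flux-p-3}, a BDM-style scaling argument on the finite-dimensional space $\pol_{k+1}(K)^d$ bounds $\|\delta\b\sigma\|_K$ by $h_K^{1/2}\|\alpha_h(u_h - \widehat u_h)\|_{\dK}$ plus a data-oscillation term $h_K\|f - \Pi_k f\|_K$ coming from the residual of $\divs\b\sigma_h$, which in turn is rewritten via the HDG variational formulation in terms of $\grads\phi$. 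Squaring, summing, and using $h_K\alpha_h \sim a|_K$ produces \eqref{mm3}. For \eqref{mm4}, write $\b\sigma_h - a\grads u_h^* = (\b\sigma_h - a\grads u_h^{*,\mathrm{dc}}) + a\grads(u_h^{*,\mathrm{dc}} - u_h^*)$; the triangle inequality gives the first summand in \eqref{mm4} directly, and the standard enrichment-operator estimate for the nodal averaging $S_{k+1}$ bounds $\|\grads(u_h^{*,\mathrm{dc}} - u_h^*)\|_K^2$ by $\sum_{F\in\widetilde{\mathcal E}_K} h_F^{-1}\|\jmp{u_h^{*,\mathrm{dc}}}\|_F^2$.

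The main obstacle I anticipate is \eqref{mm3}: although the defining system \eqref{mixed-flux-p} is local and well-posed, verifying that its solution operator is bounded uniformly in $h$ with precisely the scalings that make $\|a^{-1/2}(\b\sigma_h^* - \b\sigma_h)\|_K$ controllable by the HDG jump $\alpha_h(u_h-\widehat u_h)$ on $\dK$ and by $\|a^{1/2}\grads\phi\|_K$ is delicate. The other three estimates follow essentially verbatim from the techniques in \cite{AinsworthRankin11b, Ainsworth07b}, but the mixed-HDG flux reconstruction sits in a non-standard enriched space $\pol_{k+1}(K)^d$, and the careful bookkeeping of the divergence defect against the PDE residual is the step that requires genuine new work.
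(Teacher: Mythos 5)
Your proposal follows essentially the same route as the paper's proof: \eqref{mm1} via equilibration, integration by parts and the elementwise Poincar\'e inequality; \eqref{mm2} via the orthogonality $(\grads v,\curls \psi)=0$ for $v\in H^1_0(\Omega)$ after inserting $u_h^*$; \eqref{mm3} via a local scaling bound for the BDM-type system satisfied by $\b\rho_h=\b\sigma_h^*-\b\sigma_h$ combined with a bubble-function bound for the residual $f+\divs\b\sigma_h$; and \eqref{mm4} via the triangle inequality plus the standard averaging-operator estimate for $S_{k+1}$. The only slip is the relation $h_K\alpha_h\sim a\restrict{K}$, which for the stabilizations \eqref{mixed-stab} should simply read $\alpha_h\sim a\restrict{K}$ (the factor $h_K$ in \eqref{mm3} instead arises from the normal-trace lifting in the scaling argument); this does not change the structure of the argument.
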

\begin{proof}
Direct computation gives
\begin{alignat*}{3}
 \|a^{1/2}\grads \phi\|^2 = &\; 
 (a\grads u-\b\sigma_h, \grads \phi)\\
 = &\; 
 (f, \phi) -(\b\sigma_h, \grads \phi) &&\quad \text{(integration by parts)}\\
 = &\; 
 (f-\Pi_k f, \phi)
 +
 (\Pi_k f, \phi)\\
 &\;
 +\sum_{K\in\Oh}\bintK{\widehat{\b\sigma}_{h,mx}\cdot\b n}{\phi}
 -(\b\sigma_h, \grads \phi) &&\quad \text{(conservation \eqref{jmp-cont-m})}\\ 
 = &\; 
 (f-\Pi_k f, \phi)
 +(\b\sigma_h^*-\b\sigma_h, \grads \phi) &&\quad \text{(equilibration \eqref{eq-m})}\\  
 \le  &\; 
\eta_{CF,K}\|a^{1/2}\grads \phi\|,
\end{alignat*}
where the last inequality follows from the Cauchy-Schwarz and the Poincar\'e inequalities.
This completes the proof of \eqref{mm1}. Turning to \eqref{mm2}, since 
$ (\grads v, \curls \psi) = 0$ { for all $v\in H^1_0(\Omega)$}, 
we have 
\begin{alignat*}{3}
 \|a^{-1/2}\curls \psi\|^2 = &\; 
 (\grads u-a^{-1}\b\sigma_h, \curls \psi) = \; 
  (\grads u_h^*-a^{-1}\b\sigma_h, \curls \psi)\\
  \le &\;
\eta_{NC,K}\|a^{-1/2}\curls \psi\|.
\end{alignat*}

Let $\b\rho_h = \b\sigma_h^*-\b\sigma_h$, then for all $K\in\Oh$,
by \eqref{mixed-flux-p}, we have 
$\b\rho_h\in \pol_{k+1}(K)^d$ satisfies
\begin{align*}
(\divs \b\rho_h, v)_K = &\; 
-(f+\divs \b\sigma_h , v)_K
&&\quad \forall\, v \in \pol_{k}(K) \text{ and } (v,1)_K=0, \\
\bintK{\b\rho_h\cdot\b n}{\widehat v}
 = &\; 
 -\bintF{\alpha_h(u_h-\widehat u_h)}{\widehat v}
 &&\quad \forall\, \widehat v \in \pol_{k+1}(F),\quad \forall F\in \EK,\\
(\b\rho_h, \b\tau)_K = &\; 
0 &&\quad \forall\, \b\tau \in 
\Sigma_{k+1, sbb}.
\end{align*}
Hence, there exists a constant $c$, depending only on the polynomial degree $k$ and shape-regularity of the element $K$, such that
\begin{align}
  c\|\rho_h\|_K^2\le&\; h_K^2\|f+\divs \b\sigma_h\|_K^2 + h_K \|\alpha_h(u_h-\widehat u_h)\|_{\dK}^2,
\end{align} 
while, using a standard bubble function technique \cite{AinsworthOden00,Verfurth96}, 
we have 
\begin{align}
\label{bubble}
 c\,h_K^2\|f+\divs \b\sigma_h\|_K^2 \le 
\|a\,\grads \phi\|_{K}^2+ a|_K\,osc_k^2(f,K).
\end{align}
The choice of stabilization parameter \eqref{mixed-stab} means that 
\[
\|\alpha_h(u_h-\widehat u_h)\|_{\dK}^2
= \,a \restrict K\, \|\alpha_h^{1/2}(u_h-\widehat u_h)\|_{\dK}^2,
\]
and the proof of  \eqref{mm3} follows from these estimates.

Finally, we have 
\begin{align*}
\|\grads u_h^* - a^{-1}\b\sigma_h\|_K^2 \le &\;
2(\|\grads u_h^* - \grads u_h^{*,\mathrm{dc}}\|_K^2
+
\|\grads u_h^{*,\mathrm{dc}} - a^{-1}\b\sigma_h\|_K^2),\\
\|\grads u_h^* - \grads u_h^{*,\mathrm{dc}}\|_K^2\le &\;c \sum_{F\in \widetilde{\mathcal{E}}_K}{h_F^{-1}}{\left\|\jmp{u_h^{*,\mathrm{dc}}}\right\|_F^2}.
\end{align*}
Combining the above estimates completes the proof of  \eqref{mm4}.
\end{proof}

\begin{lemma}
 \label{lemma:mixed-jmp}
There exists a positive constant $c$, 
depending only on the shape-regularity of the mesh and the polynomial degree $k$, 
such that for any facet $F\in\EK$,  
 \begin{align}
ch_K\|\alpha_h^{1/2}(u_h-\widehat u_h)\|_F^2
 \le &\;
h_K \|\alpha_h^{1/2}(u_h-\widehat u_h)\|_{\dK\backslash F}^2\nonumber\\
 &\hspace{-0.cm}
 +\|a^{1/2}\,\grads \phi\|_{K}^2+osc_k^2(f,K).
 \end{align} 
\end{lemma}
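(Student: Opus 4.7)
The plan is to combine a localised discrete identity obtained from the mixed HDG formulation with the interior bubble-function argument already employed in the proof of \eqref{bubble}; the facet-isolating test function will be the barycentric extension to $K$ of the facet jump on $F$.

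First I would derive the identity
\[
\langle \alpha_h(u_h-\widehat u_h),v\rangle_{\dK} \;=\; (f+\divs\b\sigma_h,v)_K \qquad \forall\,v\in\pol_k(K)
\]
by testing \eqref{mixed-hdg} with $(\b\tau_h,v_h,\widehat v_h)=(\b 0,v,0)$, where $v$ is extended by zero to all other elements, and then integrating $(\b\sigma_h,\grads v)_K$ by parts. Setting $\mu:=\alpha_h(u_h-\widehat u_h)|_F\in\pol_k(F)$, I would take $\widetilde\mu\in\pol_k(K)$ to be its barycentric extension, i.e.\ the polynomial on $K$ obtained by keeping the same expression in the barycentric coordinates associated with the vertices of $F$. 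A standard scaling argument on a shape-regular simplex then yields
\[
\|\widetilde\mu\|_K \le c\,h_K^{1/2}\|\mu\|_F \quad\text{and}\quad \|\widetilde\mu\|_{F'}\le c\,\|\mu\|_F \quad \forall\,F'\in\EK.
\]

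Substituting $v=\widetilde\mu$ into the identity and using $\widetilde\mu|_F=\mu$, splitting the boundary integral into the $F$-contribution and the remainder gives
\[
\|\mu\|_F^2 \;=\; (f+\divs\b\sigma_h,\widetilde\mu)_K \;-\; \sum_{F'\in\EK,\,F'\ne F}\langle \alpha_h(u_h-\widehat u_h),\widetilde\mu\rangle_{F'}.
\]
Cauchy--Schwarz together with the scaling bounds above, followed by division by $\|\mu\|_F$, squaring, and multiplication by $h_K\alpha_h^{-1}$, produces
\[
h_K\|\alpha_h^{1/2}(u_h-\widehat u_h)\|_F^2 \;\le\; c\,h_K^2\alpha_h^{-1}\|f+\divs\b\sigma_h\|_K^2 \;+\; c\,h_K\|\alpha_h^{1/2}(u_h-\widehat u_h)\|_{\dK\backslash F}^2.
\]

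To close the argument I would invoke the standard element-bubble estimate appearing in the proof of \eqref{bubble} in Lemma \ref{lemma:mixed-key}, namely $h_K^2 a^{-1}\|f+\divs\b\sigma_h\|_K^2\le c\|a^{1/2}\grads\phi\|_K^2 + c\,osc_k^2(f,K)$, and to exploit that under either stabilisation \eqref{mixed-stab1} or \eqref{mixed-stab2} one has $\alpha_h|_F\in\{0,a|_K\}$: the case $\alpha_h|_F>0$ yields the claim by direct substitution, whereas if $\alpha_h|_F=0$ (possible only under \eqref{mixed-stab2} when $F\neq F_K^*$) the left-hand side vanishes and the inequality holds trivially. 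The principal obstacle will be bookkeeping---keeping the $\alpha_h$-factors consistent on both sides and verifying that the barycentric extension enjoys the claimed scaling uniformly over all $F'\in\EK$ on shape-regular meshes---after which the result reduces to elementary manipulation combined with the bubble estimate already established in the preceding lemma.
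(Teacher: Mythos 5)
Your proposal is correct and follows essentially the same route as the paper: both start from the local identity $\bintK{\alpha_h(u_h-\widehat u_h)}{v}=(f+\divs\b\sigma_h,v)_K$ for $v\in\pol_k(K)$, isolate the facet $F$ with a specially chosen test function scaling like $h_K^{1/2}\|\alpha_h(u_h-\widehat u_h)\|_F$, and close with the bubble estimate \eqref{bubble}. The only (immaterial) difference is the facet-localizing function: you use the barycentric extension of $\alpha_h(u_h-\widehat u_h)|_F$, while the paper defines $z\in\pol_k(K)$ by $L^2$-orthogonality to $\pol_{k-1}(K)$ together with moment matching on $F$.
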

\begin{proof}
The mixed HDG scheme \eqref{mixed-hdg} satisfies, for every $K\in \Oh$, 
\begin{align}
 \label{single-eq}
\bintK{\alpha_h(u_h-\widehat u_h)}{v} = (f+\divs \b\sigma_h, v)_K\quad 
\text{ for all }v\in \pol_k(K).
\end{align}
Let the function $z\in \pol_k(K)$ satisfy 
\begin{alignat*}{3}
 (z,w)_K  = &\; 0 \quad \text{ for all }w\in\pol_{k-1}(K),\\
 \bint{z}{\widehat w}{F^*}  = &\; \bint{\alpha_h(u_h-\widehat u_h)}{\widehat w}{F^*} \quad \text{ for all }w\in\pol_{k}(F^*),
\end{alignat*}
where $F^*$ is a fixed facet of $K$.
We have, by a standard scaling argument, 
\[
\|z\|_K\le c\, h_{K}^{1/2} \|\alpha_h(u_h-\widehat u_h)\|_{F^*}.
\]
Taking $v=z$ in \eqref{single-eq} and rearranging terms,  
we obtain 
\begin{align*}
\|\alpha_h(u_h-\widehat u_h)\|^2_{F^*} = &\;
(f+\divs \b\sigma_h, z)_K
+\bint{\alpha_h(u_h-\widehat u_h)}{z}{\dK\backslash F^*} \\
\le &\;
c\left(\|f+\divs \b\sigma_h\|_K
+h_K^{-1/2}\|\alpha_h(u_h-\widehat u_h)\|_{\dK\backslash F^*}\right)\|z\|_K\\
 &\hspace{-2cm}
\le\;c\left(h_K^{1/2}\|f+\divs \b\sigma_h\|_K
+\|\alpha_h(u_h-\widehat u_h)\|_{\dK\backslash F^*}\right)
\|\alpha_h(u_h-\widehat u_h)\|_{F^*}
\end{align*}
The proof is completed by invoking estimate \eqref{bubble} for the cell-wise residual term $f+\divs \b\sigma_h$.
\end{proof}

\begin{lemma}
 \label{lemma:mixed-nonconforming}
There exists a positive constant $c$,
depending only on the shape-regularity of the mesh and the polynomial degree $k$, 
such that 
 \begin{align}
 \label{non-mix1}
 c \|\grads u_h^{*,\mathrm{dc}}-a^{-1}\b\sigma_h\|_K^2\le &\;
 \|\grads u-a^{-1}\b\sigma_h\|_K^2,\\  
 \label{non-mix2}
c{h_F^{-1}}{\left\|\jmp{u_h^{*,\mathrm{dc}}}\right\|_F^2}
\le &
\;
\sum_{K'\in \widetilde F}\|\grads u-a^{-1}\b\sigma_h\|_{K'}^2.
 \end{align} 
\end{lemma}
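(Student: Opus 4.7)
The plan is to establish both estimates by comparing $u_h^{*,\mathrm{dc}}$ to a suitable $H^1(K)$-local counterpart of the exact solution, using that $a$ is constant on $K$ to convert the Galerkin identity defining $u_h^{*,\mathrm{dc}}$ into an $L^2$-projection statement.

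For \eqref{non-mix1}, the key observation is that \eqref{mixed-potential-p-1} is equivalent, since $a|_K$ is constant, to requiring $\grads u_h^{*,\mathrm{dc}}$ to be the $L^2(K)$-orthogonal projection $P(a^{-1}\b\sigma_h)$ onto $\grads\pol_{k+1}(K)$. I would introduce the local Galerkin projection $\tilde u\in \pol_{k+1}(K)$ defined by $(a\grads\tilde u,\grads v)_K=(a\grads u,\grads v)_K$ for all $v\in\pol_{k+1}(K)$ together with $(\tilde u,1)_K=(u,1)_K$, so that $\grads\tilde u=P(\grads u)$. Subtracting the identities for $u_h^{*,\mathrm{dc}}$ and $\tilde u$ and testing with $v=u_h^{*,\mathrm{dc}}-\tilde u$ gives, after Cauchy--Schwarz, $\|\grads(u_h^{*,\mathrm{dc}}-\tilde u)\|_K\le\|\grads u-a^{-1}\b\sigma_h\|_K$. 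A triangle inequality then reduces the task to controlling $\|\grads\tilde u-a^{-1}\b\sigma_h\|_K$. To close this, an alternative viewpoint appears more promising: introduce the local Neumann function $w\in H^1(K)$ with $(a\grads w,\grads v)_K=(\b\sigma_h,\grads v)_K$ for all $v\in H^1(K)$ and $(w,1)_K=(u_h,1)_K$, so that $\grads w-a^{-1}\b\sigma_h$ belongs to $(\grads H^1(K))^\perp$. This yields the orthogonal $L^2$-decompositions
\[
\|\grads u-a^{-1}\b\sigma_h\|_K^2=\|\grads(u-w)\|_K^2+\|\grads w-a^{-1}\b\sigma_h\|_K^2,
\]
\[
\|\grads u_h^{*,\mathrm{dc}}-a^{-1}\b\sigma_h\|_K^2=\|\grads(u_h^{*,\mathrm{dc}}-w)\|_K^2+\|\grads w-a^{-1}\b\sigma_h\|_K^2,
\]
which shares the common term $\|\grads w-a^{-1}\b\sigma_h\|_K^2$. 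The claim then reduces to showing $\|\grads(u_h^{*,\mathrm{dc}}-w)\|_K\le C\|\grads(u-w)\|_K$, where $u_h^{*,\mathrm{dc}}$ is the Galerkin projection of $w$ into $\pol_{k+1}(K)$.

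For \eqref{non-mix2}, since $u\in H^1(\Omega)$ is continuous across $F$, $\jmp{u_h^{*,\mathrm{dc}}}|_F=\jmp{u_h^{*,\mathrm{dc}}-u}|_F$, so the task reduces to bounding $\|(u_h^{*,\mathrm{dc}}-u)|_{K'}\|_F$ for each $K'\in\widetilde F$. A standard scaled trace inequality yields
\[
h_F^{-1}\|u_h^{*,\mathrm{dc}}-u\|_F^2\le C\bigl(h_{K'}^{-2}\|u_h^{*,\mathrm{dc}}-u\|_{K'}^2+\|\grads(u_h^{*,\mathrm{dc}}-u)\|_{K'}^2\bigr).
\]
The gradient piece is already controlled by \eqref{non-mix1} combined with the triangle inequality. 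For the $L^2$ piece I would exploit the mean constraint $(u_h^{*,\mathrm{dc}},1)_{K'}=(u_h,1)_{K'}$, so that $u_h^{*,\mathrm{dc}}-u$ has mean $\bar u_h-\bar u$ on $K'$; Poincaré's inequality on the mean-zero part reduces the $L^2$ contribution to the gradient (and hence to the RHS of \eqref{non-mix1}), while the residual offset $|\bar u_h-\bar u|\,|K'|^{1/2}$ is absorbed via a local comparison bound on the HDG potential.

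The main obstacle is the closing step in \eqref{non-mix1}: bounding the best polynomial approximation error $\|\grads(u_h^{*,\mathrm{dc}}-w)\|_K$ by $\|\grads(u-w)\|_K$ is \emph{not} a consequence of the Céa lemma alone, because $u$ is not polynomial and naive triangle/Pythagorean bounds reintroduce exactly the quantity being bounded. Closing it requires exploiting the polynomial structure of the data in the Neumann problem (so that elliptic regularity delivers $w\in H^{k+2}(K)$ with norms controlled through reference-element scaling) and the finite-dimensional nature of $\pol_k(K)^d\cap(\grads\pol_{k+1}(K))^\perp$, so that the final constant depends only on shape-regularity and $k$. A subsidiary difficulty for \eqref{non-mix2} is controlling the residual mean offset $|\bar u_h-\bar u|$ purely in terms of the local flux error, which requires a local superconvergence-type observation for the HDG potential.
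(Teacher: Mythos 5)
Your reduction of \eqref{non-mix1} does not close, and the obstacle you flag at the end is not merely technical: the statement you reduce to is strictly stronger than the lemma and fails for general data consistent with the hypotheses the proof actually uses. After your two Pythagorean decompositions through the local Neumann lift $w$, the lemma only requires
\[
\|\grads(u_h^{*,\mathrm{dc}}-w)\|_K^2 \;\le\; C\left(\|\grads(u-w)\|_K^2+\|\grads w-a^{-1}\b\sigma_h\|_K^2\right),
\]
whereas you discard the shared term and ask for $\|\grads(u_h^{*,\mathrm{dc}}-w)\|_K\le C\|\grads(u-w)\|_K$. The left-hand side equals $\mathrm{dist}_{L^2(K)}\big(\grads w,\grads\pol_{k+1}(K)\big)$, and nothing in the local data prevents $\grads u$ from coinciding with (or being arbitrarily close to) $\grads w$ on $K$ --- this only requires $a^{-1}\b\sigma_h-\grads u$ to be divergence-free with vanishing normal trace on $K$ --- in which case the right-hand side vanishes while the left-hand side need not, since $w$ solves a Neumann problem on a simplex and $\grads w$ is generically not a polynomial gradient (indeed, corner singularities also defeat your proposed repair via $w\in H^{k+2}(K)$). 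The paper avoids the Neumann lift entirely: it notes that $\b\rho_h=\grads u_h^{*,\mathrm{dc}}-a^{-1}\b\sigma_h$ is a degree-$k$ polynomial field orthogonal to $\grads\pol_{k+1}(K)$, builds a divergence-free, normal-trace-free field $\b\rho_h^*$ in the Raviart--Thomas space $\pol_{k+1}(K)^d\oplus\b x\widetilde{\pol}_{k+1}(K)$ whose moments against $\pol_k(K)^d$ match those of $\b\rho_h$, and uses $(\b\rho_h^*,\grads v)_K=0$ for every $v\in H^1(K)$ to write $\|\b\rho_h\|_K^2=(\b\rho_h^*,\grads u-a^{-1}\b\sigma_h)_K$; the finite-dimensional stability bound $c\|\b\rho_h^*\|_K\le\|\b\rho_h\|_K$ then finishes with a constant depending only on $k$ and shape-regularity. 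This is pure finite-dimensional duality; no regularity of $w$ or $u$ enters.

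For \eqref{non-mix2}, your trace-plus-Poincar\'e reduction leaves the facet-constant part of the jump (your ``mean offset'') uncontrolled, and this cannot be repaired by a purely element-local superconvergence observation: the difference of the element averages of $u_h^{*,\mathrm{dc}}$ across $F$ has to be bridged through the single-valued numerical trace $\widehat u_h$ on $F$ by means of the HDG element equations, which is exactly what the cited estimates of Cockburn and Zhang accomplish after splitting $\jmp{u_h^{*,\mathrm{dc}}}$ into its $\pol_0(F)$-projection and the remainder. Without bringing $\widehat u_h$ into the argument you have no mechanism coupling the two elements sharing $F$, so the right-hand side of \eqref{non-mix2}, which involves only local flux errors, cannot control that offset.
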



\begin{proof}
We first prove the estimate \eqref{non-mix1}.
We denote $\b\rho_h  = \grads u_h^{*,\mathrm{dc}}-a^{-1}\b\sigma_h$ and let
 the function $\b \rho_h^* \in \pol_{k+1}(K)^d\oplus \b x\widetilde{\pol}_{k+1}(K)$
be defined as follows:
\begin{subequations}
\label{rt}
\begin{align}
\label{rt1}
\bintF{\b\rho_h^*\cdot\b n}{\widehat v}
 = &\; 
0
 &&\quad \forall\, \widehat v \in \pol_{k+1}(F),\quad \forall F\in \EK,\\
\label{rt2}
(\b\rho_h^*, \b v)_K = &\; 
(\b \rho_h , \b v)_K
&&\quad \forall\, \b v \in 
\pol_{k}(K)^d,
\end{align}
\end{subequations}
then,  we have 
\begin{align}
\label{rt-p}
c \|\b\rho_h^*\|_K\le \|\b\rho_h\|_K.
\end{align}
Moreover, by equations \eqref{mixed-potential-p-1} and \eqref{rt}, we have
\[
(\divs \b\rho_h^*, v)_K = 
-(\b\rho_h^*,\grads v)_K
=
-(\b\rho_h,\grads v)_K = 0\quad \text{ for all }v\in \pol_{k+1}(K).
\]
This implies that $\divs \b\rho_h^* = 0$
because $\divs \b\rho_h^*\in \pol_{k+1}(K)$.
Since $\b\rho_h^*$ has vanishing normal trace on $\dK$ by 
equations \eqref{rt1}, we obtain
\[
 (\b\rho_h^*, \grads v)_K = 0\quad \text{ for all }v\in H^1(K).
\]
Hence, 
\begin{align*}
 \|\b\rho_h\|_K^2 = 
  (\b\rho_h^*, \b\rho_h)_K 
  = 
  (\b\rho_h^*, \grads u_h^{*,\mathrm{dc}} - a^{-1}\b\sigma_h)_K= 
    (\b\rho_h^*, \grads u - a^{-1}\b\sigma_h)_K
\end{align*}
The estimate \eqref{non-mix1} now follows from  the 
Cauchy-Schwarz inequality.

Let $P_{M_0}$ be the $L^2$-projection onto the space $\pol_0(F)$.
Applying the results in \cite[Lemma 3.4-3.5]{CockburnZhang14}, we obtain 
\begin{align*}
h_F^{-1}\left\|\jmp{u_h^{*,\mathrm{dc}}}\right\|_F^2 = &\;
h_F^{-1}\left\|P_{M_0}\jmp{u_h^{*,\mathrm{dc}}}\right\|_F^2
+h_F^{-1}\left\|(\mathrm{Id}-P_{M_0})\jmp{u_h^{*,\mathrm{dc}}}\right\|_F^2\\
\le &\; 
c\,\sum_{K\in\widetilde F}
(\|a^{-1}\b\sigma_h-\grads u_h^{*,\mathrm{dc}}\|_{K}^2
+\|\grads u-\grads u_h^{*,\mathrm{dc}}\|_{K}^2).
\end{align*}
The estimate \eqref{non-mix2} now immediately follows from 
the triangle inequality and \eqref{non-mix1}.

\end{proof}


\subsection{Proof of Lemma \ref{lemma:primal-stab}}
Since $\CV_{h,k, \delta}$ is a finite-dimensional space, 
 it suffices to show that $(u_h, \widehat u_h) = (0,0)$ is the only solution to the homogeneous problem.
Let
\[
 C_{k,K}: =\sum_{F'\in\EK}\frac{|F'|^2}{|K|},\text{ and }   D_{k,K}: = \frac{k(k+d-1)}{d}C_{k,K}, 
\]
then for $(v_h,\widehat v_h)\in \CV_{h,k, \delta}$, we have 
 \begin{align*}
  \Bpr\big(
  (v_h,\widehat v_h),(v_h,\widehat v_h)
  \big) = &\;
 \sum_{K\in\Oh}\Big\{
(a\,\grads v_h, \grads v_h)_K - 2\bintK{a\,\grads v_h\cdot \b n}{v_h-\widehat v_h\,}\\
&\;
+ \bintK{\frac{a\,\gamma\, C_{k,K}}{|F|}(P_M v_h-\widehat v_h)}{P_Mv_h-\widehat v_h\,} 
 \Big\},
 \end{align*}
 Since $a\grads v_h\cdot \b n\restrict F\in \pol_{k-1}(F)$, there holds 
 \[
  \bintK{a\,\grads v_h\cdot \b n}{v_h-\widehat v_h\,}
  =\bintK{a\,\grads v_h\cdot \b n}{P_Mv_h-\widehat v_h\,}.
 \] 
 By the Cauchy-Schwarz and Young's inequalities,  for any $\epsilon >0$,
 \begin{align*}
  2\bintF{a\,\grads v_h\cdot \b n}{v_h-\widehat v_h\,}
  \le &\;
 \frac{|F|}{a\,\epsilon} \|a\,\grads v_h\cdot \b n\|_{F}^2
+\frac{a\,\epsilon}{|F|} \|P_Mv_h-\widehat v_h\|_{F}^2\\
&\!\!\!\!\!\!\!\!\!\!\!\!\!\!\!\!\!\!\!\!\!\!\!\!\!\le \;
 \frac{|F|}{a\,\epsilon} \frac{k(k+d-1)}{d}\frac{|F|}{|K|}
\|a\,\grads v_h\|_{K}^2    
+\frac{a\,\epsilon}{|F|} \|P_Mv_h-\widehat v_h\|_{F}^2
 \end{align*}
where, 
the final inequality holds thanks to 
{ the inverse-trace inequality \cite{WarburtonHesthaven03}
and $a\grads u\restrict K \in \pol_{k-1}(K)^d$}.
Summing the above inequality over $F\in \EK$ gives \begin{align*}
  2\bintK{a\,\grads v_h\cdot \b n}{v_h-\widehat v_h\,}
  \le &\;
 \frac{D_{k,K}}{\epsilon} \|a^{1/2}\,\grads v_h\|_{K}^2
+\sum_{F\in\EK}\frac{a\,\epsilon}{|F|} \|P_Mv_h-\widehat v_h\|_{F}^2.
 \end{align*}
Hence, we have 
\begin{align*}
   \Bpr\big(
  (v_h,\widehat v_h),(v_h,\widehat v_h)
  \big)
  \ge &\;
  \sum_{K\in\Oh}
\big(1-\frac{D_{k,K}}{\epsilon}
\|a^{1/2}\,\grads v_h\|_{K}^2\\
&\;\;
+
\sum_{F\in\EK}\frac{a\,(\gamma \, C_{k,K} -\epsilon)}{|F|} \|P_Mv_h-\widehat v_h\|_{F}^2
\Big).
\end{align*}

Finally, if $\gamma$ satisfies \eqref{stab-cst} then there exists
an  $\epsilon$ such that 
\[
\gamma\, C_{k,K} >\epsilon >D_{k,K}.
\]
Consequently, when the right hand side of \eqref{primal-hdg} vanishes, 
$\|\grads v_h\|_K = 0$ for all $K\in \Oh$ and 
$\|P_M v_h-\widehat v_h\|_F = 0$ for all $F\in \Eh$.
Hence, the only solution to the homogeneous problem is $(v_h, \widehat v_h) =(0,0)$, and therefore there exists a unique solution $(u_h,\widehat u_h)\in \CV_{h,k,\delta}$ to \eqref{primal-hdg}.

\subsection{Proof of Lemma \ref{lemma:primal-norm-eq}}
The proof follows that of  \cite[Theorem 3]{Ainsworth07a}. 

By the conservation property \eqref{jmp-cont-p}, we have 
\begin{align}
\label{explicit}
 \widehat u_h\restrict F = \left\{
 \begin{tabular}{l l}
  $0$ & if $F\in \Eh^\partial$,\vspace{0.2cm}\\
  $\frac{\avg{\alpha_h\, P_Mu_h}}{\avg{\alpha_h}}-\frac{1}{2\avg{\alpha_h}}{\jmp{a\grads u_h}}$ & if $F\in \Eh^o$.
 \end{tabular}
\right.
\end{align}
Hence, 
 \begin{align*}
P_M u_h- \widehat u_h\restrict F = \left\{
 \begin{tabular}{l l}
  $P_M u_h$ & if $F\in \Eh^\partial$,\vspace{0.2cm}\\
  $\frac{\jmp{P_Mu_h}\cdot\b n}{2\alpha_h\avg{1/\alpha_h}}+\frac{1}{2\avg{\alpha_h}}{\jmp{a\grads u_h}}$ & if $F\in \Eh^o$.
 \end{tabular}
\right.
\end{align*}
Inserting  the above expression into the jump term in the HDG energy norm and regrouping gives
\begin{align*}
 \sum_{K\in\Oh} 
  \bintK{\alpha_h P_M(u_h-\widehat u_h)}{P_M(u_h-\widehat u_h)}
  & \\
  &\hspace{-5cm} = 
  \sum_{F\in\Eh^o}\left(
  \bintF{\frac{1}{2\avg{1/\alpha_h}} \jmp{P_M u_h}}{\jmp{P_M u_h}}
  +
\bintF{\frac{1}{2\avg{\alpha_h}}\jmp{a\grads u_h}}{\jmp{a\grads u_h}}\right)\\
  &\hspace{-4cm} + 
  \sum_{F\in\Eh^\partial}
  \bintF{\alpha_h {\jmp{P_M u_h}}}{\jmp{P_M u_h}}
\end{align*}
Here, the gradient jump term can be controlled by the standard bubble function technique \cite{Verfurth96,AinsworthOden00}
\begin{align*}
c  \sum_{F\in\Eh^o}
 h_F\| \jmp{a\grads u_h}\|_F^2\le &\;
  \vertiii{(e_u,\widehat e_u)}_{pr}^2+ \sum_{K\in\Oh} osc_{k-1}^2(f,K).
\end{align*}

On the other hand, there holds
\[
\|\jmp{P_M u_h}\|_F^2
 = 
 |F|(\overline{\jmp{P_M u_h}})^2
  + 
\|\jmp{P_M u_h}-\overline{\jmp{P_M u_h}}\|_F^2,
\]
where $\overline{\jmp{P_M u_h}}$ denote the average value of $\jmp{P_M u_h}$ on the facet $F$.
Thanks to the trace and Poincar\'e inequalites, we have 
\[
c h_F^{-1}\|\jmp{P_M u_h}-\overline{\jmp{P_M u_h}}\|_F^2
 \le \sum_{K'\in\widetilde F}\|\grads u-\grads u_h\|_{K'}^2.
\]

Hence, to show norm equivalence, it remains to show that the term
\[
\sum_{F\in\Eh} \frac{|F|}{h_F}(\overline{\jmp{P_M u_h}})^2
\]
can be controlled by the discrete energy seminorm plus the data oscillation. 
Replacing $\widehat u_h$ in the primal HDG scheme 
\eqref{primal-hdg} with the expression \eqref{explicit}, we obtain 
\begin{align}
 \label{single}
0= &\;(f-\divs a\grads u_h, v_h)_K\nonumber\\
 &\;-\sum_{F\in\EKI}\bintF{\frac{\jmp{P_Mu_h}\cdot\b n}{2\alpha_h\avg{1/\alpha_h}}+\frac{1}{2\avg{\alpha_h}}\jmp{a\grads u_h}}{a\grads v_h\cdot\b n-\alpha_h\,P_M v_h}\nonumber\\
 &\;-\sum_{F\in\EKB}\bintF{P_Mu_h}{a\grads v_h\cdot\b n-\alpha_h\,P_M v_h}
\end{align}
for all $v_h\in V_{h,k}$ and $K\in\Oh$.
{
Moreover, there holds \cite{Shewchuk03}
\[
 (d+1)^2\rho(\b S_K) \le \sum_{F'\in\EK}\frac{|F'|^2}{|K|},
\]
where $\b S_K$ is the element stiffness matrix, i.e. $\b S_{ij}=(\grads \lambda_i,\grads\lambda_j)_K$ with $\{\lambda_\ell\}_{\ell=1}^{d+1}$ being the barycentric coordinates for the element $K$, and 
$\rho(\b S_K)$ is its spectral radius. 
Hence the stabilization parameter $\alpha_h$ in \eqref{stab-pr}, with $\gamma$ satisfying \eqref{stab-cst}, satisfies
\[
\alpha_h\restrict{F} > \frac{a|_K}{|F|}(d+1)^2\rho(\b S_K),\quad 
\forall F\in\EK.
\]
The proof is then concluded following \cite[Theorem 3]{Ainsworth07a} by taking special linear test functions in the equation 
\eqref{single} and using the above estimate for the stabilization parameter.}

\bibliographystyle{siam}

\end{document}